\newtheorem{theorem}{Theorem}[section]
\newtheorem{lemma}[theorem]{Lemma}
\theoremstyle{theorem}
\newtheorem{definition}[theorem]{Definition}
\newtheorem{proposition}[theorem]{Proposition}
\newtheorem{corollary}[theorem]{Corollary}
\newtheorem{assumption}[theorem]{Assumption}
\theoremstyle{remark}
\newtheorem{remark}[theorem]{Remark}
\numberwithin{equation}{section}
\def\C{\mathbb{C}}
\def\R{\mathbb{R}}
\def\Z{\mathbb{Z}}
\def\F{\mathbb{F}}
\def\W{\mathcal{W}_\chi}
\def\sumw{\sum_{w\in W}}
\def\a{\alpha}
\def\la{\lambda}
\def\La{\Lambda}
\def\ga{\gamma}
\def\Ga{\Gamma}
\def\T{\widetilde{S}}
\def\G{\widetilde{G}}
\def\M{\widetilde{M}}
\def\K{\widetilde{K}}
\def\P{\widetilde{P}}
\def\s{\widetilde{S}}
\def\J{\widetilde{J}}
\def\sgn{\operatorname{sgn}} % notation for the sign function, but I'll want to check Step's template.
\def\Hom{\operatorname{Hom}}
\def\End{\operatorname{End}}
\def\Res{\operatorname{Res}}
\def\Gal{\operatorname{Gal}}\def\Spec{\operatorname{Spec}}
\newcommand{\conj}[1]{\overline{#1}}
\newcommand{\map}[2]{: #1\to #2}
\def\p{\varpi} % the uniformiser
\def\sss{{\bf s}}
\def\fbar{\overline{F}}
\newcommand{\inv}{^{-1}}
\def\Gm{\mathbb{G}_m}
\def\lqt{\backslash} % left quotient
\numberwithin{equation}{section}
\newcommand{\arxiv}[1]{\href{http://arxiv.org/abs/#1}{\tt arXiv:\nolinkurl{#1}}}
\newcommand{\arXiv}[1]{\href{http://arxiv.org/abs/#1}{\tt arXiv:\nolinkurl{#1}}}
\title[]{The metaplectic Casselman-Shalika formula}
\address{}\email{mcnamara@maths.usyd.edu.au}
\author{Peter J McNamara}
\date{\today}
\begin{document}
\maketitle

\begin{abstract}
This paper studies spherical Whittaker functions for central extensions of reductive groups over local fields. We follow the development of Chinta and Offen to produce a metaplectic Casselman-Shalika formula for tame covers of all unramified groups.
\end{abstract}

\section{Introduction}

Suppose that $G$ is an unramified reductive group over a non-archimedean local field $F$. By definition, this means that $G$ is the generic fibre of a smooth reductive group scheme over the valuation ring $O_F$, or equivalently that $G$ is quasi-split and splits over an unramified extension of $F$. The Casselman-Shalika formula is an explicit formula for the spherical Whittaker function that is associated to an unramified principal series representation of $G(F)$.

In this paper, we replace $G(F)$ by a central extension of it by a finite cyclic group, and develop a Casselman-Shalika formula for this so-called metaplectic group. Our main result is the union of Theorems \ref{formal}, \ref{sl2tau} and \ref{su3tau}. Theorem \ref{formal} is the metaplectic Casselman-Shalika formula, computing the metaplectic Whittaker function in terms of a certain Weyl group action. Theorems \ref{sl2tau} and \ref{su3tau} describe how a simple reflection acts in this Weyl group action. Along the way, in Section \ref{gkformula}, we prove a metaplectic Gindikin-Karpelevic formula.

Our approach is to follow the technique of Chinta and Offen \cite{chintaoffen} who have shown how to generalise the approach of Casselman and Shalika \cite{cs80} to provide a formula for a Whittaker function on the metaplectic cover of $GL_r$. The purpose of this paper is to show how their technique generalises to the more general case of covers of unramified groups.

This paper naturally splits into two parts. In the first part of this paper, we work in the generality of considering any finite cyclic cover of any reductive $p$-adic group modulo Assumptions \ref{assumption1} and \ref{abelian}.
These assumptions are valid in the most important case when $G$ is simply connected and unramified. For more general $G$ these are more subtle issues and there is some discussion of this in the body of the paper.
This part of the paper culminates in the aforementioned Theorem \ref{formal}, and closely follows the approach of Chinta and Offen. The second part begins with Section \ref{parttwo} and develops the necessary extra results to enable one to compute this Whittaker function when the underlying reductive group is unramified.

To conclude, we compare our computation of the metaplectic Whittaker function with the objects appearing in the local part of a Weyl group multiple Dirichlet series constructed by Chinta and Gunnells \cite{cg}.

Throughout, we work with an assumption that $q\equiv 1\pmod{2n}$ (where $q$ is the cardinality of the residue field and $n$ is the degree of the central extension)
which we feel is worth remarking on. In writing this paper, we have strived to work with the greatest possible range of groups $G$. In this generality, we are not sure how to proceed at times under what might be considered the more natural assumption that $q\equiv 1\pmod{n}$.
As an indication of the simplification this assumption entails, one may look at the setup of Weissman \cite{weissmanl}, where the assumption $q\equiv 1\pmod{2n}$ corresponds to trivialising his second twist $\chi$. However we expect that working only with $q\equiv 1\pmod{n}$ will ultimately only introduce signs into various formulae we encounter.

We have chosen not to discuss the spherical function as we do not believe we can offer anything novel to say about it. The techniques of Casselman \cite{casselmanshperical} are sufficient to compute the metaplectic spherical function, for example see \cite[\S 10]{chintaoffen} for a treatement of the case when $G=GL_n$.

% We have chosen not to attempt this here since we feel that the extra technical complication will detract from the main narrative of this paper.

The author would like to thank D. Bump, G. Chinta, G. Hainke, O. Offen, A. Puskas and some anonymous help for useful discussions.

%%%%%%%%%%%%%%%%%%%%%%%%%%%%%%%%%%%%%%%%%%%%%%%%%%%%%%%%%%%%%%%%%%%%%%%
\section{The metaplectic group}\label{metaplectic}%%%%%%%%%%%%%%%%%%%%%
%%%%%%%%%%%%%%%%%%%%%%%%%%%%%%%%%%%%%%%%%%%%%%%%%%%%%%%%%%%%%%%%%%%%%%%

Fix a positive integer $n$. Let $F$ be a non-archimedean local field with valuation ring $O_F$, uniformiser $\p$ and residue field of order $q$. We assume $q\equiv 1\pmod{2n}$. Let $\mu_n$ denote the group of $n$-th roots of unity,
this is a cyclic group of order $n$.
Fix, once and for all, an embedding $\epsilon\map{\mu_n}{\C^\times}$.
% We anticipate that the ideas of this paper are generalisable to the case where $q$ is only assumed to be congruent to 1 modulo $n$. We shall not attempt this here, since we anticipate the extra technical work will detract from the themes of this paper.

Let $G$ be a connected reductive algebraic group over $F$. Let $S$ be a maximal split torus of $G$ and let $T$ be a maximal torus of $G$ containing $S$. We use the following theorem to find a split reductive subgroup $G'$ of $G$ that will be of use to us.

\begin{theorem}\cite[Th\'eor\`eme 7.2]{boreltits}\label{reducetosplit}
Let $\Phi=\Phi(S,G)$ be the root system of $G$ relative to $S$, and let $\Phi'$ be the set of roots $a$ for which $2a$ is not a root. Let $\Delta$ be a choice of simple roots in $\Phi'$. For each $a\in \Delta$, let $E_a$ be a one-dimensional subgroup of the root subgroup of $a$, and let $V$ be the group generated by the $E_a$. Then $G$ possesses a unique split reductive subgroup $G'$ containing $S$ and $V$. The torus $S$ is a maximal torus of $G'$ and the root system is $\Phi'=\Phi(G',S)$. In particular, the Weyl groups of $G$ and $G'$ are isomorphic.
\end{theorem}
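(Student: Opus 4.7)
The plan is to construct $G'$ as the subgroup of $G$ generated by $S$ together with a coherent family of one-dimensional subgroups $\{E_a\}_{a\in\Phi'}$ extending the given data, and then to verify that this subgroup is split reductive with root system $\Phi'$. The first step is to enlarge the family from $\Delta$ to all of $\Phi'$. For each $a\in\Delta$, the rank-one quasi-split reductive subgroup of $G$ generated by $S$ and the root subgroups $U_{\pm a}$ (of relative type $A_1$ or $BC_1$) supplies a compatible $E_{-a}\subset U_{-a}$ together with a lift $n_a\in N_G(S)$ of the simple reflection $s_a$. For a general $b = w(c)\in\Phi'$ with $c\in\pm\Delta$, I would define $E_b := n_w E_c n_w^{-1}$ for any lift $n_w\in N_G(S)$ of $w$. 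Independence of the choices is the first technical check and reduces to the rank-one and rank-two subsystems.

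The heart of the argument is to verify Chevalley-type commutator relations among the $E_a$: for $a,b\in\Phi'$ with $b\ne -a$, the commutator $[E_a,E_b]$ must lie in the subgroup generated by the $E_{ia+jb}$ with $i,j>0$ and $ia+jb\in\Phi'$. This is the main obstacle. In the reduced case ($\Phi=\Phi'$) the result is the classical Chevalley commutator formula. The difficulty arises when $\Phi$ is non-reduced: a priori the commutator could produce a component in some $U_c$ with $c\in\Phi\setminus\Phi'$ (a doubled root), lying outside the indexing set of $G'$. I would handle this by a case analysis of the rank-two sub-root systems of $\Phi$, using $S$-equivariance together with the particular choice of $E_a$ to show that no such components can appear.

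Once the commutator relations are in place, $G'$ inherits a $(B,N)$-pair whose Weyl group is identified with $W=W(G,S)$, and the structure theorem for split reductive groups identifies $G'$ as split reductive with maximal torus $S$ and root system $\Phi'$. For uniqueness, suppose $H\subseteq G$ is another split reductive subgroup containing $S$ and $V$. Then $S$ is necessarily a maximal torus of $H$, and $\Phi(H,S)$ contains $\Delta$; because $\Phi(H,S)$ is stable under the Weyl group generated by the simple reflections and is reduced (as $H$ is split) and contained in $\Phi$, it must equal $\Phi'$. The root subgroups of $H$ are then forced by the $S$-weights to coincide with the $E_a$, so $H=G'$.
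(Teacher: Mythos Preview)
The paper does not give its own proof of this statement; it is quoted from Borel--Tits (Th\'eor\`eme~7.2 of \cite{boreltits}) and used as a black box to produce the split subgroup $G'$. There is therefore nothing in the paper to compare your proposal against.

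As an outline of how the Borel--Tits argument goes, your sketch is broadly on the right track, but you have correctly located the real difficulty without resolving it. The step ``no component of $[E_a,E_b]$ lands in a doubled-root subgroup'' is precisely the content of the theorem in the non-reduced case, and ``case analysis of the rank-two subsystems'' is not yet an argument: one must actually carry out the computation in type $BC_2$ (the only non-reduced irreducible rank-two system) and see that a suitable choice of the one-dimensional $E_a$ inside the possibly higher-dimensional $U_a$ makes the unwanted terms vanish. Your well-definedness check for $E_b$ under different presentations $b=w(c)=w'(c')$ also hides work: $S$-stability handles ambiguity in the lift $n_w$, but the case of genuinely different $(w,c)$ again reduces to a rank-two verification. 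Finally, in your uniqueness paragraph, concluding that the root subgroups of $H$ agree with the $E_a$ for $a\notin\Delta$ requires knowing that $E_{-a}$ (and hence the lifts $n_a$) already lie in $H$; this follows from a rank-one $SL_2$ computation, but should be said. None of these are fatal---they are exactly the computations Borel and Tits perform---but your write-up is a plan rather than a proof.
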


Let $\fbar$ be a separable closure of $F$. We write $W$ for the Weyl group of $G$ and $W_{\fbar}$ for the Weyl group of $G_{\fbar}$, the base change of $G$ to $\fbar$. Consider the geometric cocharacter lattice $X_*(T_{\fbar})=\Hom_{\fbar}(\Gm,T_{\fbar})$. This is equipped with actions of both $W_{\fbar}$ and the Galois group $\Gal(\fbar/F)$.

% Let us assume that $Q$ takes non-zero values on all simple s. This is a harmless a as when $Q$ is zero, the corresponding central extension discussed in the next paragraph is trivial.

Brylinski and Deligne \cite[Theorem 7.2]{bd} provide a classification of central extensions of $G$ by $K_2$ as sheaves on the big Zariski site over $\Spec(F)$. Since $H^1(F,K_2)=0$, at the level of $F$-points, this gives a central extension of $G$ by $K_2(F)$. We push forward such a central extension by the Hilbert symbol $K_2(F)\rightarrow \mu_n$ to obtain a central extension $\G$ of $G$ by $\mu_n$. Explicitly, such a central extension is a short exact sequence of topological groups
$$
1\rightarrow \mu_n \rightarrow \G \rightarrow G\rightarrow 1
$$ with $\mu_n$ lying in the centre of $\G$. We write $p\map{\G}{G}$ for the projection map. On occasion, we shall find it necessary to express $\G$ in terms of a 2-cocycle on $G$. When this is the case, we typically denote the section $G\mapsto \G$ by $\sss$ and the chosen 2-cocycle by $\sigma$. Note that $\sss$ is not a homomorphism, the multiplication in $\G$ is given by $\sss(g_1g_2)=\sss(g_1)\sss(g_2)\sigma(g_1,g_2)$.

All representations of $\G$ which we consider will be representations where the central $\mu_n$ acts by scalars by the embedding $\epsilon$. Such representations are called \emph{genuine}. We often invoke the convention of omitting $\epsilon$ from the notation.

The major piece of data classifying central extensions of $G$ by $K_2$ is a $\Gal(\fbar/F)$ and $W_{\fbar}$-invariant quadratic form $Q\map{\Hom_{\fbar}(\Gm,T_{\fbar})}{\Z}$. Let us fix once and for all such a quadratic form $Q$.

When the group $G$ does not split over $F$, the metaplectic cover can be constructed via a descent process starting from the split case. We will now give a brief description of the most relevant part of this construction when the group $G$ is unramified. This is the case which is of greatest interest to us. This process is described with more detail in \cite[\S 12.11]{bd}. Let $E$ be a degree $d$ unramified extension of $F$ over which $G$ splits.

We think about the construction of $\G$ as a three step process. First one constructs the central extension of $G(F)$ by $K_2(F)$. Then one pushes forward by the tame symbol $K_2(F)\rightarrow k^\times$ to arrive at an extension of $G$ by $k^\times$. One finally pushes forward this extension via the operation of raising to the $(q-1)/n$-th power to obtain the metaplectic group $\G$.

The Brylinski-Deligne construction shows in this case that $\G$ can be realised as a subgroup of a group $\widetilde{G(E)}$ which is a central extension of $G(E)$ by the group of $n\frac{q^d-1}{q-1}$-th roots of unity.

For any subgroup $H$ of $G$, we denote by $\widetilde{H}$ the inverse image of $H$ in $\G$.

Let $B(x,y)=Q(x+y)-Q(x)-Q(y)$ be the bilinear form on $X_*(T_{\fbar})$ associated to $Q$. The commutator map $[\cdot,\cdot]\map{\widetilde{T}\times \widetilde{T}}{\mu_n}$ factors through $p\times p\map{\widetilde{T}\times \widetilde{T}}{T\times T}$ to give a well-defined map from $T\times T$ to $\mu_n$, also denoted $[\cdot,\cdot]$. It takes the following form \cite[Corollary 3.14]{bd}:
\begin{equation}\label{commutatorformula}
[x^\la,y^\mu]=(x,y)^{B(\la,\mu)},
\end{equation}
where $(\cdot,\cdot)\map{F^\times\times F^\times}{\mu_n}$ is the Hilbert Symbol.

There is an explicit formula for the Hilbert symbol in the tame case which we are in, namely
\begin{equation}\label{hilbert}
(x,y)=\left( \overline{(-1)^{v(x)v(y)} \frac{x^{v(y)}}{y^{v(x)})}} \right)^{\frac{q-1}{n}}
\end{equation}
where the bar means to take the reduction modulo $\varpi$ of an element of $O_F$. A choice of an embedding $\mathbb{F}_q^\times \to \C^\times$ is needed to identify the image of $(\cdot,\cdot)$ as lying in $\mu_n$.

Let us now restrict our attention to the central extension $\G'$ of $G'$. There is a natural inclusion of cocharacter groups $X_*(S)\subset X_*(T_{\fbar})$. The restriction of $Q$ to $X_*(S)$ is a $W$-invariant quadratic form and $\G'$ is the central extension of $G'$ associated to $Q$ by the Brylinski-Deligne theory.

Since $G'$ is split, we can, and will make use of the theory developed in \cite{mcn} where only covers of split groups were considered. 

We may identify the Bruhat-Tits building of $G'$ with a subset of the Bruhat-Tits building of $G$. Pick a hyperspecial point in the apartment corresponding to $S$, and let ${\bf G}$ be the corresponding group scheme over $O_F$ with special fibre $G$ via Bruhat-Tits theory. We let $K={\bf G}(O_F)$, this is a maximal compact subgroup of $G$.

We will need to define a lift for any element of $W$ into $G$. To achieve this, it suffices to work inside the group $G'$. By a theorem of Tits \cite{tits}, for each simple reflection $s_\a\in W$, we can choose $w_\a\in {\bf G'}(O_F)$ such that the collection of $w_\a$'s so obtained satisfy the braid relations. Furthermore, if we consider the natural projection from the group generated by the $w_\a$ to $W$, the kernel is an elementary abelian 2-group contained in $S$. Thus for any $w\in W$, we define a lift by writing $w=s_{\a_1} \cdots s_{\a_N}$ as a reduced product of simple reflections, and letting the lift be the product $w_{\a_1}\cdots w_{\a_N}$. Let us denote by $W_0$ the subgroup of $G$ generated by the $w_\a$.

Let $M$ be the centraliser of $S$ in $G$. This is a minimal Levi subgroup of $G$.

\begin{lemma}\label{kerproj}
The kernel of the projection from $W_0$ to $W$ lies in the centre of $\M$.
\end{lemma}
\begin{proof}

For a general element $m\in \M$, $p(m)$ is semisimple, so lies in a maximal torus $T'$ of $G$. The central extension $\G$ is obtainable via a descent process from a central extension of $G(E)$ where $E$ is some Galois extension of $F$. To check whether $m$ and $z$ commute, it suffices to perform this computation in the central extension of $G(E)$. We choose $E$ so that the torus $T'$ splits. This reduces the problem to the split case.

Now suppose $G$ is split and $z$ is in the aforementioned kernel. Note that $z^2=1$. If $m\in \T$, then by the explicit formula (\ref{commutatorformula}) for the commutator, we compute $[m,z]=1$, since $(q-1)/n$ is even and hence the Hilbert symbol $(-1,x)$ is always 1.
\end{proof}
%Since $p(z)\in S$ which is central in $M$, there is a group homomorphism from $M$ to $\mu_n$ given by $m\mapsto [\tilde{m},z]$. 
%Choose $m\in \M$. The group $M$ contains no unipotent elements, so there exists a maximal torus $T'\subset M$ containing $p(m)$.
%The central extension $\G$ is obtainable from a central extension of 

%Since $\M$ is the union of the conjugates of $\T$, it suffices to show that this homomorphism is trivial on $\T$. But to show this, it suffices to pass to a field extension where $\G$ is split. Using our formulae for the commutator in the split case, we see that $z$ is central in $\T$ as we are assuming that $(q-1)/n$ is even, and we are done.

At all stages, unless explicitly mentioned otherwise, we choose normalisations of Haar measures such that the intersection with the maximal compact subgroup $K$ has volume 1.

%There are cal isomorphisms $\s/H\cong Y/\La$ and $H/(H\cap \mu_n K)\cong \La$.

%We now discuss a couple of splitting properties. 
A subgroup $J$ of $G$ is said to be split in the extension if there is a section $J\mapsto \J$ that is a group homomorphism. When this occurs, we also denote the image of $J$ in $\G$ by $J$.
\begin{theorem}\cite[Proposition 4.1]{mcn}\label{unipotentsplitting}
Any unipotent subgroup of $G$ has a unique splitting.
\end{theorem}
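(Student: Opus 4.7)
Since $q\equiv 1\pmod{2n}$, the residue characteristic $p$ is coprime to $n$, so the kernel $\mu_n$ of the central extension has order coprime to $p$. My plan is to combine pro-$p$ cohomology vanishing with a uniqueness-based gluing argument.

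For any pro-$p$ group $P$ and any discrete $P$-module $A$ of finite order coprime to $p$, the continuous cohomology $H^i(P,A)$ vanishes for all $i\ge 1$. Applied to an open compact pro-$p$ subgroup $K\subseteq U(F)$, with $A=\mu_n$ carrying the trivial action (since $\mu_n$ is central in $\G$), the vanishing of $H^2$ produces a continuous splitting $s_K\colon K\to \widetilde{K}$ of the restricted extension $1\to\mu_n\to\widetilde{K}\to K\to 1$, and the vanishing of $H^1$ forces $s_K$ to be the unique such splitting.

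Assuming $U(F)$ admits an exhaustion by a directed family of open compact pro-$p$ subgroups $K_j$, the uniqueness of the $s_{K_j}$ forces $s_{K_j}$ and $s_{K_{j'}}$ to agree on $K_j\cap K_{j'}$, so they patch to a continuous splitting $s\colon U(F)\to\widetilde{U}(F)$. Any other splitting $s'$ differs from $s$ by a continuous homomorphism $U(F)\to\mu_n$ that restricts trivially to every $K_j$ by local uniqueness, and is hence trivial; this establishes the canonicity statement.

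The main obstacle is showing that $U(F)$ is exhausted by open compact pro-$p$ subgroups. For $\mathbb{G}_a(F)=F^+$ one has $F^+=\bigcup_j \p^{-j}O_F$ where each $\p^{-j}O_F$ is compact open and pro-$p$, so for split $U$ (built by successive extensions from copies of $\mathbb{G}_a$) one stitches these exhaustions layer-by-layer through the filtration. In the non-split case one first uses the Borel-Tits theorem quoted above, or Galois descent from $\fbar$, to reduce the structural question to a split situation, after which the cohomological argument finishes everything with no further input.
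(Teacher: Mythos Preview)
The paper does not supply its own proof of this theorem; it is quoted verbatim from \cite[Proposition 4.1]{mcn}, so there is no in-paper argument to compare against. I will therefore assess your proposal on its own merits.

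Your cohomological strategy is sound and is in fact the standard one: since $q\equiv 1\pmod{2n}$ forces $\gcd(p,n)=1$, the vanishing of continuous $H^1$ and $H^2$ of a pro-$p$ group with coefficients in the prime-to-$p$ module $\mu_n$ gives, for each open compact pro-$p$ subgroup $K\subset U(F)$, a \emph{unique} splitting over $K$; uniqueness then glues the local splittings over an exhaustion and forces canonicity. This is exactly the mechanism used in the literature (e.g.\ in \cite{mcn} and in \cite{li}).

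Two small points. First, your worry about the ``non-split case'' is misplaced in the present setting: over a field of characteristic~$0$ every connected unipotent group admits a filtration with successive quotients isomorphic to $\mathbb{G}_a$, so no descent from $\fbar$ is needed; and in any characteristic the unipotent radicals of parabolics in a reductive group (which is all the paper ever uses) are split by the root-subgroup filtration. A clean way to write down the exhaustion is to fix an integral model, take $K_0=U(O_F)$, and set $K_j=\p^{-j\lambda}K_0\p^{j\lambda}$ for a strictly dominant cocharacter $\lambda$; these are open compact pro-$p$ and exhaust $U(F)$. Second, the phrase ``any unipotent subgroup of $G$'' should be read as the $F$-points of a unipotent algebraic subgroup; your argument handles exactly that, and since any such subgroup is contained in the unipotent radical of a minimal parabolic (after conjugation), the restriction of the unique splitting there gives compatibility across all unipotent subgroups.
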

\begin{remark}
 In the reference quoted the splitting is said to be canonical but an examination of the proof shows it is unique.
\end{remark}

We will make the following assumption, necessary to make sense of the notion of a spherical Whittaker function. 
\begin{assumption}\label{assumption1}
The subgroup $K$ has a splitting.
\end{assumption}
When $G$ is not simply connected, the splitting or otherwise of a Brylinski-Deligne extension over $K$ is subtle. For example \cite[\S 4.6]{gg} gives an example of a Brylinski-Deligne extension of $PGL_2$ by $\mu_2$ which does not split over $PGL_2(O_F)$. In the same paper a criterion for splitting the subgroup $K$ is given \cite[Corollary 4.2]{gg} for all split $G$. The family of central extensions $\G$ which satisfy the condition of Assumption \ref{assumption1} via this criterion contains a large family of interesting central extensions.

We now explain why, when $G$ is simply connected and unramified, that making this assumption on the splitting of $K$ does not entail any loss of generality.

If $G$ is split, then Assumption \ref{assumption1} is true by \cite[Corollary 4.2]{gg}. Now consider any unramified simply connected $G$. It is possible to pushforward the central extension $\G$ by the inclusion $\mu_n\hookrightarrow S^1$ and get a central extension of $G$ by the group of complex numbers of norm 1. Doing so does not in any way change the representation theory of $\G$. 
The group $\K$ which is now a central extension of $K$ by $S^1$ is a subgroup of a split group $\widetilde{L}$ which is the inverse image in the central extension of a maximal compact subgroup after making an unramified base change where $G$ splits. Therefore the group $\K$ splits under the central extension by $S^1$, which is sufficient for our representation theoretic purposes.

The splitting of $K$ is not unique in general. We will choose once and for all such a splitting. Since the group $W_0$ lies in $K$, this allows us to define a lift of any element $w\in W$ to an element of $\G$, which we will by an abuse of notation also call $w$.

We will need to choose a lift of $X_*(S)$ into $\G$. To achieve this, 
we first use the uniformiser $\p$ to determine a homomorphism from $X_*(S)$ to $G$, namely $\la\mapsto \p^\la$. Then we have a central extension
\begin{equation}\label{extxs}
1\to \mu_n \to\widetilde{X_*(S)} \to X_*(S)\to 1
\end{equation}
which is abelian by the commutator formula (\ref{commutatorformula}). At this point we are using the assumption that $2n$ divides $q-1$ which has the consequence that $(\p,\p)=1$.

Since $X_*(S)$ is a free abelian group, the short exact sequence (\ref{extxs}) of abelian groups splits. We choose a splitting and also denote it by $\la\mapsto \p^\la\in\G$.

%it suffices to work with the split subgroup $G'$ of $G$. Thus we may use the cocycle present in \cite{mcn}. In this manner, we define our lift by $\la\mapsto \sss(\p^\la)$ and commonly denote this image simply by $\p^\la$. Since $2n$ divides $q-1$, the Hilbert symbol $(\p,\p)$ is trivial and this map $X_*(S)\to\G$ is a homomorphism.

Now that we have a splitting of $K$, we define a subgroup $H$ of $\M$ as follows:
\[
 H=\{h\in \M \mid [h,\eta]\in K\ \text{for all}\  \eta\in \M\cap K\}.
\]
If $G$ happens to be quasisplit, then this is simply the centraliser in $\M$ of $M\cap K=\M\cap K$.

\begin{lemma}\label{inh}
Let $s\in S$. Then $\sss(s^n)\in H$.
\end{lemma}

\begin{proof}
Fix $m\in \M$. As $S$ is central in $M$, the map $\psi_m\map{S}{\mu_n}$ defined by $\psi_m(s)=[m,s]$ is a group homomorphism, hence trivial on all $n$-th powers.
\end{proof}

For any coroot $\a$, define $n_\a=n/\gcd(n,Q(\a))$.

\begin{lemma}
Suppose $G$ is unramified and let $\a$ be a coroot. Then $\p^{n_\a \a}\in H$.
\end{lemma}
\begin{proof}
Under these assumptions, $M$ is a maximal torus of $G$. We have to show that $\p^{n_\a \a}$ commutes with all of $\M$. To do this, it suffices to pass to an extension field where $G$ is split, then we can use the commutator formula (\ref{commutatorformula}) to deduce the result.
\end{proof}

Write $\La'$ for the lattice $\M/(\mu_n(M\cap K))$ and $\La$ for the sublattice $H/(\mu_n(M\cap {K}))$. Lemma \ref{inh} shows that $\La$ is finite index in $\La'$.
If $G$ is unramified, $\La'$ is canonically isomorphic to the cocharacter lattice $X_*(S)$.% and $\La$ is identified with the sublattice
%$$\La=\{\lambda\in X_*(S) \mid \sss(\p^\lambda)\in H\}=\{ x\in X_*(S) \mid B(x,y)\in n\Z\ \forall\ y\in X_*(S) \}.$$ The equivalence of the two given presentations is a consequence of the commutator formula \cite[3.1]{mcn}.

We will make one more assumption, which will be used in the construction of the principal series representations.
\begin{assumption}\label{abelian}
%There is a $W$-equivariant isomorphism $H/(M \cap K)\cong \mu_n\times \La$.
The quotient group $H/(M\cap K)$ is abelian.
\end{assumption}
%Such an isomorphism will not be canonical, but that does not concern us.
In the unramified case, the following lemma proves that this assumption is always satisfied.

\begin{lemma}\label{abelianlemma}
 If $G$ is unramified, then $H$ is abelian.
\end{lemma}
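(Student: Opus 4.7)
The plan is to reduce the commutativity of $H$ to the vanishing of the Hilbert symbol $(\p,\p)_n$, which will follow immediately from the standing hypothesis $2n\mid q-1$. Since $G$ is unramified hence quasi-split, $M$ is a maximal torus $T$, and as the paper has just pointed out, in this case $H$ is the centraliser in $\widetilde{T}$ of $T\cap K$. Since $T(F)$ is abelian, the commutator on $\widetilde{T}$ descends to a bimultiplicative pairing $c\map{T(F)\times T(F)}{\mu_n}$, so the task is to show $c(h_1,h_2)=1$ for every $h_1,h_2\in H$.

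Next I would use the structure of an unramified torus: the valuation map fits into a split short exact sequence
\[
1\to T(O_F)\to T(F)\to X_*(S)\to 1,
\]
with splitting $\la\mapsto\p^\la:=\la(\p)$, so every element of $T(F)$ factors uniquely as $t\cdot\p^\la$ with $t\in T(O_F)$ and $\la\in X_*(S)$. Because the chosen splitting of $K$ embeds $T\cap K$ in $\G$ with $K\cap\mu_n=\{1\}$, the pairing $c$ is trivial on $T(O_F)\times T(O_F)$.

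For $h_i\in H$ projecting to $t_i\p^{\la_i}$, bimultiplicativity breaks $c(h_1,h_2)$ into four pieces: the piece $c(t_1,t_2)$ is trivial by the previous paragraph; the two mixed pieces $c(\p^{\la_1},t_2)$ and $c(t_1,\p^{\la_2})$ vanish because for any $\eta\in T\cap K$, expanding $c(h_j,\eta)=c(t_j,\eta)c(\p^{\la_j},\eta)$ and using $h_j\in H$ together with $c(t_j,\eta)=1$ forces $c(\p^{\la_j},\eta)=1$; leaving only $c(\p^{\la_1},\p^{\la_2})$. The commutator formula from Section \ref{metaplectic} evaluates this last factor as $(\p,\p)^{B(\la_1,\la_2)}$, and the tame expression $(\p,\p)_n=(-1)^{(q-1)/n}$ is trivial precisely because $2n\mid q-1$.

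The only genuinely delicate step is the structural decomposition of $T(F)$ in the unramified case, together with the compatibility of the splitting $\la\mapsto\p^\la$ used here with the lift of $X_*(S)$ chosen in the paper just above the lemma; both are standard features of unramified tori but should be spelt out in order to legitimately apply the commutator formula to the non-split torus $T$.
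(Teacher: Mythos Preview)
Your proof is correct and follows essentially the same route as the paper: decompose each $h_i$ into a compact part and an $S$-part, observe that compact pieces commute with everything via the splitting of $K$, and reduce to showing two elements of $\widetilde S$ in the centraliser of $S\cap K$ commute. The paper dispatches this last step by citing \cite[Lemma 5.3]{mcn}, whereas you unpack it explicitly via the commutator formula $[\p^{\la_1},\p^{\la_2}]=(\p,\p)^{B(\la_1,\la_2)}$ and the vanishing of $(\p,\p)_n$ under $2n\mid q-1$; this is precisely the content of the cited lemma.
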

\begin{proof}
 Since $G$ is unramified, $M$ is abelian and $M=S(M\cap K)$. Now suppose that $h_1,h_2\in H$. Write $h_i=s_ik_i$ with $s_i\in \s$ and $k_i\in M\cap K$. The only nontrivial part is to show that $s_1$ and $s_2$ commute. But $s_i$ commutes with $\s\cap K$, so by \cite[Lemma 5.3]{mcn}, we're done.
\end{proof}

%One may wish to compare these conditions we have shown to hold in the unramified case with those appearing in \cite[D\'efinition 3.1.1]{li}.
% \begin{proposition}
%  If $G$ is unramified, then Assumption \ref{abelian} holds.
% \end{proposition}
% 
% 
% \begin{proof} There is a short exact sequence of abelian groups
% \[
%  0\to \mu_n \to H/(M\cap K) \to \La \to 0,
% \] where the middle term is abelian by Lemma \ref{abelianlemma}.
% Since $\La$ is a sublattice of $X_*(S)$, which we have already constructed a lift of to $\G$, we may define a section $\La\to H/(M\cap K)$ by $\la\to \p^\la$. This section induces the desired $W$-equivariant isomorphism. %Although not canonical, this isomorphism is $W$-equivariant, which is sufficient for our purposes.
% \end{proof}
The following elementary observation will often be used
\begin{lemma}\label{integratecharacter}
 Let $A$ be a compact topological group and $\chi\map{A}{\C^\times}$ a nontrivial homomorphism. Then $\int_A \chi(a)da=0.$
\end{lemma}

%%%%%%%%%%%%%%%%%%%%%%%%%%%%%%%%%%%%%%%%%%%%%%%%%
\section{Unramified representations}%%%%%%%%%%%%%r
%%%%%%%%%%%%%%%%%%%%%%%%%%%%%%%%%%%%%%%%%%%%%%%%%

Let $P$ be a minimal parabolic $F$-subgroup of $G$ containing $S$ and let $U$ be its unipotent radical. The quotient $P/U$ is canonically isomorphic to the Levi subgroup $M=Z_G(S)$.
Thinking of $M$ as a quotient of $P$ makes the constructions we make more canonical, for example $I(\chi)$ will not depend on the choice of a minimal parabolic subgroup up to canonical isomorphism.
% Let $N$ be the intersection of $\ker (\nu)$ as $\nu$ runs over all rational characters of $M$. Then we have $N\subset K$, $M=N.S$ and $N\cap S$ is a finite group.

Consider the complex algebraic variety
\[
 Y=\{\chi\in\Hom(H/(\M\cap K),\C^\times)\mid \chi(\zeta)=\epsilon(\zeta) \ \text{for all} \ \zeta\in\mu_n\}.
\]
There is a short exact sequence of groups\begin{equation}
\label{ses}
  0\to \mu_n \to H/(\M\cap K) \to \La \to 0,
 \end{equation} where the middle term is abelian by Assumption \ref{abelian}. As $\La$ is free, this sequence splits, so we obtain a noncanonical isomorphism $Y\cong \Spec(\C[\La])$. 

Consider a (complex) point $\chi\in Y$. We define the corresponding unramified representation $(\pi_\chi,i(\chi))$ of $\M$ as follows: Given $\chi$, we %turn it into a character of $\mu_n \times \La$ by letting $\mu_n$ act faithfully. In this way, $\chi$ defines 
inflate it to a one-dimensional representation of the subgroup $H$. We define $i(\chi)$ to be the induction of this representation from $H$ to $\M$. Note that $i(\chi)$ is finite dimensional.

We also construct an unramified principal series representation $I(\chi)$ of $\G$. First we use the canonical surjection $\P\rightarrow \M$ to consider $i(\chi)$ as a representation of $\P$. The representation $I(\chi)$ is defined to be the induction of $i(\chi)$ from $\P$ to $\G$.

Concretely, $I(\chi)$ is the space of all locally constant functions $f\map{\G}{i(\chi)}$ such that
\[
f(pg)=\delta^{1/2}(p)\pi_\chi(p)f(g)
\] for all $p\in\P$ and $g\in \G$ where $\delta$ is the modular quasicharacter of $\P$. The action of $\G$ on $I(\chi)$ is given by right translation.

The group $W_0$ acts on $\M$ by conjugation, and Lemma \ref{kerproj} shows that this descends to an action of $W$ on $\M$. As $W_0$ is a subgroup of $K$, this induces an action of $W$ on the subgroup $H$, under which the subgroup $M\cap K$ is stable. Hence we obtain an action of $W$ on $Y$. 
If $G$ is unramified, then $\La$ is a sublattice of $X_*(S)$ which allows us to choose the splitting $\la\mapsto \p^\la$ of (\ref{ses}). This choice makes the noncanonical isomorphism $Y\cong \Spec(\C[\La])$ Weyl group invariant.

We now define isomorphisms between the underlying vector spaces of $i(\chi)$ and $i(w\chi)$. As an induced representation, the underlying vector space of $i(\chi)$ is
\[
 \{ f\colon \M\to \C\mid f(mh)=\chi(h)f(m) \text{ for all } h\in H,m\in \M \}.
\]

Our choice of isomorphism $i(\chi)\xrightarrow{\sim} i(w\chi)$ is $f\mapsto {^wf}$, where
\[
 ^wf(m)=f(w^{-1}mw).
\]
These form a transitive system of isomorphisms.

% We may now define an explicit isomorphism between the underlying vector spaces of $i(\chi)$ and $i(w\chi)$, $f\mapsto{}^w\!f$ by $^w\!f(m)=f(wm w^{-1})$. Here, as $i(\chi)$ and $i(w\chi)$ are induced representations, we are realising them as spaces of functions on $\M$ which transform by $\chi$ and $w\chi$ respectively under left multiplication by $H$.

%This induces an action of $W$ on the category of representations of $\M$. In particular, we have constructed an explicit isomorphism between the underlying vector spaces of $i(\chi)$ and $i(w\chi)$ for any $\chi$ and any $w\in W$.

\begin{theorem}\label{kfixed}
The map $f\mapsto f(1)$ is an isomorphism between $I(\chi)^K$ and $i(\chi)^{\M\cap K}$. These are both
one-dimensional vector spaces.
\end{theorem}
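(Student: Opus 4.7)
The plan is to invoke the Iwasawa decomposition $\G = \P K$, which lifts from $G = PK$ via the chosen splitting of $K$, to pin down any $f \in I(\chi)^K$ by its single value $f(1)$. The transformation law $f(pg) = \delta^{1/2}(p)\pi_\chi(p)f(g)$ together with right $K$-invariance immediately gives $f(pk) = \delta^{1/2}(p)\pi_\chi(p)f(1)$ for every $p \in \P$, $k \in K$, so the map $f \mapsto f(1)$ is injective.

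To identify the image, I would observe that for $k \in \M \cap K \subseteq \P \cap K$, the two formulas $f(k) = f(1)$ (from $K$-invariance) and $f(k) = \delta^{1/2}(k)\pi_\chi(k)f(1)$ (from the transformation law) must agree; since $\delta$ is trivial on the compact subgroup $\P \cap K$, this forces $\pi_\chi(k)f(1) = f(1)$, placing $f(1)$ in $i(\chi)^{\M \cap K}$. Conversely, given $v \in i(\chi)^{\M \cap K}$, I would define $f(pk) := \delta^{1/2}(p)\pi_\chi(p)v$ and verify well-definedness by decomposing $\P \cap K = (\M \cap K)(\U \cap K)$, using that $\U \cap K$ acts trivially on $i(\chi)$ (since $\pi_\chi$ is inflated from $\P/\U \cong \M$) and that $v$ is fixed by $\M \cap K$.

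The main obstacle is showing $i(\chi)^{\M \cap K}$ is one-dimensional. I would realize $i(\chi) = \Ind_H^\M \chi$ explicitly as the space of functions $\phi \colon \M \to \C$ satisfying $\phi(hm) = \chi(h)\phi(m)$, with $\M$ acting by right translation. Since the splitting of $K$ places $\M \cap K$ inside $H$, and $\chi$ is trivial on $\M \cap K \subseteq H$ by construction, the compatibility between the left $(H,\chi)$-equivariance and the right $\M \cap K$-invariance of $\phi$ becomes a condition on the commutators $[m,k]$ for $k \in \M \cap K$. The faithfulness of $\chi$ on $\mu_n$ combined with the very definition of $H$ --- the subgroup of $m \in \M$ for which $[m, \M \cap K] \subseteq K$ --- forces any such $\phi$ to vanish outside $H$, and on $H$ the function is determined by $\phi(1)$. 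This yields the required one-dimensionality, and shows that the inverse construction of the previous paragraph produces a bijection.
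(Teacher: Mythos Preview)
Your argument is correct and follows the standard route that the paper defers to via the citation of \cite[Lemma 6.3]{mcn}: Iwasawa decomposition for the isomorphism $I(\chi)^K\cong i(\chi)^{\M\cap K}$, and a Mackey/commutator argument for the one-dimensionality of the latter. Two small points are worth making explicit. First, in the well-definedness step you need that the $K$-splitting and the canonical splitting of $U$ agree on $U\cap K$; this holds because $U\cap K$ is pro-$p$ while $\gcd(n,p)=1$, so there are no nontrivial homomorphisms $U\cap K\to\mu_n$. Second, your one-dimensionality argument tacitly uses $[m,k]\in\mu_n$ for $m\in\M$ and $k\in\M\cap K$, which is exactly the statement that $M$ is abelian---true in the unramified setting that is the paper's focus, but you should flag it rather than leave it implicit.
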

\begin{proof}
The argument of \cite[Lemma 6.3]{mcn} applies in this case without change.
\end{proof}
The identification of the spaces $i(\chi)$ and $i(w\chi)$ constructed above can be construed as an action of $W$ on $i(\chi)$. Under this action, the subspace $i(\chi)^{\M\cap K}$ is invariant. Let us pick a non-zero vector $v_0$ in this subspace. By Theorem \ref{kfixed}, we choose a spherical vector $\phi_K^{(\chi)} \in I(\chi)^K$ for each $\chi$ in a $W$-orbit in a compatible manner such that $\phi_K^{(w\chi)}(1)=v_0$.

%%%%%%%%%%%%%%%%%%%%%%%%%%%%%%%%%%%%%%%%%%%%%
\section{Intertwining operators}%%%%%%%%%%%%%
%%%%%%%%%%%%%%%%%%%%%%%%%%%%%%%%%%%%%%%%%%%%%

For each positive root $\a$, there is an associated root subgroup $U_\a$ of $U$ with Lie algebra equal to $\operatorname{Lie}(G)_\a+\operatorname{Lie}(G)_{2\a}$.
For any $w\in W$, define $U_w$ to be the group equal to the product
\[
 \prod_{\a>0, w\a<0} U_\a.
\]
 This is also canonically isomorphic to the left quotient $(U\cap wUw^{-1})\lqt U$.

The (unnormalised) intertwining operators $T_w\map{I(\chi)}{I(w\chi)}$ are defined by
$$
(T_w f)(g)=%\prod_{\substack{\a>0 \\ w\a<0}}(1-x_a^{n_a})
\int_{U_w} f(w^{-1}ug)du.
$$ whenever this is absolutely convergent, and by a standard process of meromorphic continuation in general, for example following \cite[\S 7]{mcn}. It is a routine calculation to show that $T_w$ does indeed map $I(\chi)$ into $I(w\chi)$ as claimed. We denote by $X$ the open subset of $Y$ on which all the intertwining operators $T_w$ have no poles.
% RECALL WHAT Y IS!

When $w=s$ is a simple reflection, then we freely identify $U_s$ with the intersection of $U$ and the corresponding standard Levi subgroup $M_s$.

\begin{proposition}\label{fubini}
Suppose that $w_1$ and $w_2$ are two elements of $W$ such that $\ell(w_1w_2)=\ell(w_1)+\ell(w_2)$. Then the intertwining operators satisfy the identity $T_{w_1w_2}=T_{w_1}T_{w_2}$.
\end{proposition}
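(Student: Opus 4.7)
The plan is to prove the identity first on the open region where both sides are given by absolutely convergent integrals, and then extend to arbitrary $\chi$ by meromorphic continuation, since both sides are meromorphic families of intertwining operators on $\C[\La]$.

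On the convergence region, I would unwind the two definitions. Expanding,
\[
(T_{w_1}T_{w_2}f)(g)=\int_{U_{w_1}}\int_{U_{w_2}} f(w_2^{-1}u_2 w_1^{-1}u_1 g)\,du_2\,du_1.
\]
The first step is then to rewrite the inner integrand as $f(w_2^{-1}w_1^{-1}(w_1 u_2 w_1^{-1}) u_1 g)$ and make the substitution $v_2=w_1 u_2 w_1^{-1}$, pushing $U_{w_2}$ to $w_1 U_{w_2} w_1^{-1}$. The Jacobian is trivial once one notes that the root subgroups appearing in $U_{w_2}$ correspond to roots $\beta>0$ with $w_2^{-1}\beta<0$, and under the length-additivity hypothesis the conjugates $w_1\beta$ remain positive. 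Hence $w_1 U_{w_2} w_1^{-1}\subset U$, and the Haar measure on $U_{w_2}$ pulls back to the Haar measure on $w_1 U_{w_2} w_1^{-1}$ (using the canonical splitting of Theorem \ref{unipotentsplitting} to identify unipotent subgroups with their lifts in $\G$, so no cocycle correction appears).

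The next step is to show that the map $(u_1,v_2)\mapsto v_2 u_1$ gives a measure-preserving isomorphism
\[
U_{w_1}\times (w_1 U_{w_2} w_1^{-1})\longrightarrow U_{w_1w_2}.
\]
Root-theoretically, $U_{w_1}$ parameterises positive roots $\alpha$ with $w_1^{-1}\alpha<0$, while $w_1 U_{w_2} w_1^{-1}$ parameterises positive roots $\gamma$ with $w_1^{-1}\gamma>0$ and $w_2^{-1}w_1^{-1}\gamma<0$. Under the hypothesis $\ell(w_1w_2)=\ell(w_1)+\ell(w_2)$ these two sets are disjoint and their union is precisely the set of positive roots sent to negative roots by $(w_1w_2)^{-1}$, i.e.\ the roots parameterising $U_{w_1w_2}$. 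Together with the standard fact that a product ordering on positive roots produces Haar-compatible coordinates on a unipotent group, this gives the required factorisation.

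The final step is to check compatibility of the Weyl lifts in $\G$. Since the lifts of simple reflections $w_\a$ satisfy the braid relations by Tits's theorem as recalled in Section \ref{metaplectic}, and since length additivity means that a reduced expression for $w_1w_2$ is obtained by concatenating reduced expressions for $w_1$ and $w_2$, we have $\widetilde{w_1w_2}=\widetilde{w_1}\cdot\widetilde{w_2}$ as elements of $W_0\subset\G$. Substituting this into the iterated integral obtained above yields $(T_{w_1w_2}f)(g)$, completing the proof on the convergence region. The main technical obstacle is the measure-preserving product decomposition of $U_{w_1w_2}$; the Weyl-lift compatibility and the meromorphic continuation step are essentially formal once Tits's theorem and the canonical unipotent splitting are in hand.
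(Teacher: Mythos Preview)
Your argument is correct and is essentially the same as the paper's: the paper simply states that $(u_1,u_2)\mapsto w_1u_2w_1^{-1}u_1$ is a measure-preserving bijection $U_{w_1}\times U_{w_2}\to U_{w_1w_2}$ and that the rest is a standard Fubini manipulation, which is exactly the substitution $v_2=w_1u_2w_1^{-1}$ you carry out. Your additional remarks on the root-theoretic justification, the compatibility of the Tits lifts under length additivity, and the meromorphic continuation are details the paper leaves implicit.
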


\begin{proof}
There is a measure preserving bijection from $U_{w_1}\times U_{w_2}$ to $U_{w_1w_2}$ given by $(u_1,u_2)\mapsto w_1u_2w_1^{-1}u_1$. The remainder of the proof is a standard manipulation involving Fubini's theorem.
\end{proof}

Let $B$ denote the fraction field of the coordinate ring $\mathcal{O}(Y)$.

\begin{theorem}\label{gk}
There exists a non-zero element $c_w(\chi)\in B$ such that
for all $\chi\in Y$,
$$
T_w \phi_K^{(\chi)} =  c_w(\chi) \phi_K^{(w\chi)}.
$$
\end{theorem}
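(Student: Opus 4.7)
The plan has three steps. First, since each $T_w$ is $\G$-equivariant, it preserves the subspace of $K$-fixed vectors. By Theorem~\ref{kfixed}, both $I(\chi)^K$ and $I(w\chi)^K$ are one-dimensional, spanned by $\phi_K^{(\chi)}$ and $\phi_K^{(w\chi)}$ respectively. So for every $\chi$ where $T_w$ is regular there is a unique scalar $c_w(\chi)$ with $T_w \phi_K^{(\chi)} = c_w(\chi)\,\phi_K^{(w\chi)}$; this defines $c_w$ pointwise on $X$, and the only nontrivial content of the theorem is that $c_w$ is a nonzero element of the rational function field $B$.

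Second, I reduce to the case of a simple reflection. Given a reduced expression $w = s_{\alpha_1} \cdots s_{\alpha_N}$, Proposition~\ref{fubini} gives $T_w = T_{s_{\alpha_1}} \cdots T_{s_{\alpha_N}}$, and iterated application of the defining relation yields
\[
c_w(\chi) = \prod_{i=1}^N c_{s_{\alpha_i}}\bigl(s_{\alpha_{i+1}} \cdots s_{\alpha_N}\chi\bigr).
\]
Hence it suffices to prove $c_{s_\alpha} \in B$ and $c_{s_\alpha} \neq 0$ for every simple reflection $s_\alpha$.

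Third, for a simple reflection $s = s_\alpha$, I evaluate $(T_s \phi_K^{(\chi)})(1) = \int_{U_s} \phi_K^{(\chi)}(s^{-1} u)\,du$ directly. Since $U_s$ sits inside the rank-one Levi $M_s$, the Iwasawa decomposition inside $M_s$ writes each $s^{-1} u$ as $p(u)\,k(u)$ with $p(u) \in \P$ and $k(u) \in K$. Partitioning $U_s$ into the subsets on which the valuation of the root-group parameter is constant, the integrand $\delta^{1/2}(p(u))\pi_\chi(p(u))v_0$ becomes piecewise constant, and the integral collapses to a sum over the integer valuations of terms involving $\pi_\chi(\varpi^{\mu}) v_0$ for appropriate $\mu$. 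In a region of absolute convergence this is a geometric series whose indeterminate is a monomial in $\chi(\varpi^{n_\alpha \alpha})$ (with coefficients involving Gauss-sum data from the metaplectic cocycle), and it extends by meromorphic continuation to a rational function in $B$.

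The main obstacle is the third step: computing the rank-one integral carefully enough to verify that the resulting rational function is not identically zero. In the classical setting this is the Gindikin--Karpelevich calculation, but in the metaplectic case one must track the twisted cocycle on $\T$ and distinguish the two possible quasi-split rank-one types, which is precisely what Propositions~\ref{sl2tau} and~\ref{su3tau} achieve. Nonvanishing of $c_{s_\alpha}$ then reduces to observing that the leading term of the Gauss-sum-weighted geometric series is nonzero, so the resulting rational function cannot vanish identically.
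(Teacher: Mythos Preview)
Your skeleton matches the paper's proof: reduce to a simple reflection via Proposition~\ref{fubini}, use one-dimensionality of $I(\chi)^K$ to define $c_s(\chi)$ as a scalar, and evaluate $(T_s\phi_K^{(\chi)})(1)=\int_{U_s}\phi_K^{(\chi)}(s^{-1}u)\,du$ by decomposing $U_s$ into valuation shells to obtain a geometric series. Two substantive corrections are needed.

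First, you invoke Propositions~\ref{sl2tau} and~\ref{su3tau} and speak of Gauss-sum coefficients in $c_s$. Those propositions compute the coefficients $\tau_{a,b}^{(s,\chi)}$ (the change-of-basis matrix for Whittaker functionals), not $c_s$; no Gauss sums appear in $c_s$ at all, and the explicit unramified formula is Theorem~\ref{gkunramified}. More to the point, the paper's proof of Theorem~\ref{gk} needs no case split by rank-one type and no explicit cocycle computation. The key step you hide behind the phrase ``appropriate $\mu$'' is this: since $(T_s\phi_K^{(\chi)})(1)$ lies in the one-dimensional space $i(\chi)^{\M\cap K}=\C v_0$, the shells $C_r$ with $\varpi^{r\alpha}\notin H$ must contribute zero. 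This is exactly what forces the indeterminate of the series to be $\chi(\varpi^{n_\alpha\alpha})$ rather than $\chi(\varpi^\alpha)$, and hence what places $c_s$ in $B$. Conjugation by $\varpi^{n_\alpha\alpha}$ then identifies the remaining shell integrals up to a factor of $\chi(\varpi^{2n_\alpha\alpha})$, giving the geometric series directly.

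Second, your nonvanishing argument (``the leading term of the Gauss-sum-weighted geometric series is nonzero'') is imprecise, and since there are no Gauss sums here it rests on a miscomputation. The paper's argument is cleaner: send $x_\alpha\to 0$. In this limit every shell integral with $r>0$ vanishes and only the integral over $U_s\cap K$ survives, which equals $v_0\neq 0$; hence $c_s$ is not identically zero.
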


\begin{remark}
In Theorem \ref{gkunramified} we will provide a more precise statement for unramified groups.
\end{remark}

\begin{proof}
By Proposition \ref{fubini}, we may assume without loss of generality that $w$ is a simple reflection $s$.
Also by meromorphic continuation, we may assume without loss of generality that the defining integral for $T_s$ converges.

The function $T_s\phi_K^{(\chi)}$ is guaranteed to be $K$-invariant, and hence by Theorem \ref{kfixed} is a scalar multiple of $\phi_K^{(s\chi)}$.
Thus $c_s(\chi)$ exists as a function on the open subset of $Y$ on which the intertwining operator $T_s$ has no poles. To compute $c_s(\chi)$, it suffices to evaluate $(T_s\phi_K^{(\chi)})(1)$.

There is a filtration on $U_s$ induced by a valuation of root datum. This consists of the data of a compact subgroup $U_{s,r}$ of $U_s$ for each $r\in \R$ with the property that $U_{s,r'}\subset U_{s,r}$ if $r'\leq r$. We let $C_r=U_{s,r}\setminus \cup_{r'<r}U_{s,r}$. First we collate some facts about these sets.

If $u\in C_r$ and $r>0$, then $s^{-1}u\in \mu_n \p^{r\a} U_s K$. Conjugation by $\p^{\a}$ sends $C_r$ to $C_{r+2}$, and there are only finitely many orbits of non-empty $C_r$ under the action of conjugation by $\p^\a$.

There is a decomposition of $U_s$ into the disjoint union
$$
U_s = (U_s\cap K) \bigcup \left( \bigcup_{r>0} C_r\right).
$$
We apply this decomposition to the domain of integration in the equation
$$
T_s\phi_K^{(\chi)}(1)=\int_{U_s} \phi_K^{(\chi)}( s^{-1} u ) du.
$$
The integral over $U_s\cap K$ is equal to $v_0$. For the integrals over the $C_r$, we use the fact that $T_s\phi_K^{(\chi)}(1)\in i(s\chi)^{\M\cap K}$ to see that only those $r$ for which $\p^{r\a}\in H$ can give a non-zero contribution. This provides an expression for $c_s(\chi)$ of the form
\[
c_s(\chi)=1+\sum_{r>0,\p^{r\a}\in H} d_r \chi(\p^{r\a})
\] for some constants $d_r$.

Note that by Lemma \ref{inh}, $\p^{n\a}\in H$.
In passing from the integral over $C_{r}$ to that over $C_{r+2n}$ via conjugation by $\p^{n\a}$, the integrand has been multiplied by $(\delta^{1/2}\chi)(\p^{2n \a})$ while the change of coordinates contributes a factor of $\delta(\p^{n \a})^{-1}$. Hence one integral is $\chi(\p^{2n\a})$ times the other. Thus our expression for $c_s(\chi)$ is actually a sum of geometric series, so is an element of $B$ as required.
%One simple way to see that $c_s(\chi)$ is non-zero is to take the limit as $x_\a$ tends to zero, when only the integral over $U_s\cap K$ survives.
%From the identity $su=bk$, we get $s(tut^{-1})=(sts)t^{-1} (tbt^{-1}) (tkt^{-1})$. So theintegral over $U_{m+2n_\a \a}$ is simply going to be $x_\a^{n_\a}$ times the integral over $U_m$. So we get a geometric series, completing the proof.
\end{proof}

In light of the above result, we now define a renormalised version of the intertwining operators. Let $\conj{T}_w=c_w(\chi)^{-1}T_w$. The upshot is that we have the equation
\begin{equation}\label{one}
 \conj T_w \phi_K^{(\chi)} = \phi_K^{(w\chi)}
\end{equation} as well as the following Proposition.

\begin{proposition}\label{whomomorphism}
For any $w_1,w_2\in W$, we have
\begin{equation}\label{wh}
\conj{T}_{w_1w_2}=\conj{T}_{w_1}\conj{T}_{w_2}. 
\end{equation}

\end{proposition}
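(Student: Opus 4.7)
The plan is to verify the identity in three stages. First, I would treat the case where the lengths add. Suppose $\ell(w_1 w_2) = \ell(w_1) + \ell(w_2)$. Applying Proposition \ref{fubini} to the spherical vector $\phi_K^{(\chi)}$ and comparing with Theorem \ref{gk}, the left side evaluates to $c_{w_1 w_2}(\chi) \phi_K^{(w_1 w_2 \chi)}$ while the right side evaluates to $c_{w_1}(w_2 \chi) c_{w_2}(\chi) \phi_K^{(w_1 w_2 \chi)}$. Non-vanishing of $\phi_K^{(w_1 w_2 \chi)}$ forces the cocycle identity $c_{w_1 w_2}(\chi) = c_{w_1}(w_2 \chi) c_{w_2}(\chi)$, which combined with Proposition \ref{fubini} gives (\ref{wh}) in this case.

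Next I would establish $\conj T_s \circ \conj T_s = \mathrm{id}$ on $I(\chi)$ for each simple reflection $s$. The unnormalised composition $T_s \circ T_s$ is a $\G$-intertwining endomorphism of $I(\chi)$, and the strategy is to unfold the defining double integral, use that conjugation by $s$ exchanges $U_s = U_\a$ with $U_{-\a}$, and invoke the Iwasawa decomposition for the rank-one Levi $M_s$ to collapse $T_s^2$ to a scalar operator on $I(\chi)$. The centrality of the lift of $s^{-2}$ in $\M$ (from the lemma preceding Theorem \ref{kfixed}) accounts for the metaplectic twist and ensures the output is a genuine scalar rather than merely an endomorphism. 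Evaluating on $\phi_K^{(\chi)}$ pins the scalar down to $c_s(\chi) c_s(s\chi)$, yielding $\conj T_s^2 = \mathrm{id}$.

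For arbitrary $w_1, w_2$, the plan is to take reduced expressions $w_1 = s_1 \cdots s_p$ and $w_2 = t_1 \cdots t_q$; the first step then gives $\conj T_{w_1} \conj T_{w_2} = \conj T_{s_1} \cdots \conj T_{s_p} \conj T_{t_1} \cdots \conj T_{t_q}$. The concatenated word represents $w_1 w_2 \in W$ but is generally not reduced; it can be transformed into a reduced expression for $w_1 w_2$ through a sequence of braid moves and cancellations $ss \to 1$. Braid moves are length-additive equalities between reduced words for the same Weyl element and so preserve the product of $\conj T$'s by the first step, while cancellations are handled by the second step; the resulting reduced expression realises $\conj T_{w_1 w_2}$. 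The main obstacle is the second step, where promoting ``scalar on the one-dimensional spherical subspace'' to ``scalar on all of $I(\chi)$'' requires either the double-integral computation sketched above or an appeal to generic irreducibility of $I(\chi)$ combined with meromorphic continuation through the domain $X$ of Theorem \ref{gk}.
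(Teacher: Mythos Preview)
Your plan is sound but takes a longer route than the paper. The paper dispatches the proposition in one stroke: for $\chi$ with trivial $W$-stabiliser (a Zariski-dense condition), \cite[Theorem~5.2]{bz} gives $\dim\Hom_{\G}(I(\chi),I(w\chi))=1$, so $\conj T_{w_1w_2}$ and $\conj T_{w_1}\conj T_{w_2}$, both lying in this line and both sending $\phi_K^{(\chi)}$ to $\phi_K^{(w_1w_2\chi)}$ by (\ref{one}), must coincide; density then extends this to all $\chi$. No case division, no Coxeter word combinatorics, and no separate treatment of $\conj T_s^2$ are required. Your fallback in step two---generic irreducibility plus Schur---is exactly this idea specialised to $w_1w_2=1$; once that tool is on the table, applying it directly to arbitrary $w_1,w_2$ as the paper does is both shorter and sidesteps the rank-one double integral you sketch, which is not entirely routine in the metaplectic setting. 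What your approach does buy is independence from \cite{bz} if the direct computation of $T_s^2$ is carried through, and it makes the cocycle relation $c_{w_1w_2}(\chi)=c_{w_1}(w_2\chi)c_{w_2}(\chi)$ in the length-additive case explicit.
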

\begin{proof}
The lattices $\La$ and $X_*(S)$ are commensurable, so
the set of characters $\chi$ with trivial stabiliser under the $W$-action is dense in $Y$. Hence it suffices
to prove the proposition for such $\chi$. As in \cite[Theorem 7.1]{mcn}, we use \cite[Theorem 5.2]{bz} to conclude that
$\dim \Hom_{\G} (I(\chi),I(w\chi))=1$. The proposition now follows from Theorem \ref{gk}.
\end{proof}

%%%%%%%%%%%%%%%%%%%%%%%%%%%%%%%%%%%%%%%%%%%%%%%
\section{Iwahori invariants}%%%%%%%%%%%%%%%%%%%
%%%%%%%%%%%%%%%%%%%%%%%%%%%%%%%%%%%%%%%%%%%%%%%%

The structure and results of this section closely follow \cite[\S 3.3]{chintaoffen}. Let $I$ be the Iwahori subgroup of $G$, maximal with respect to intersection with $P$ among all Iwahori subgroups contained in $K$.

\begin{proposition}
The dimension of the space of vectors in an unramified principal series representation $I(\chi)$ that are invariant under the Iwahori subgroup $I$, is equal to $|W|$.
\end{proposition}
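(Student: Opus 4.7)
The plan is to reduce the dimension count to Theorem~\ref{kfixed} via the Iwahori-Bruhat decomposition $\G = \bigsqcup_{w \in W} \P w I$. This decomposition gives a direct sum splitting $I(\chi)^I = \bigoplus_{w \in W} V_w$, where $V_w$ consists of those $I$-invariant functions supported on the single double coset $\P w I$. It therefore suffices to prove $\dim V_w = 1$ for every $w \in W$.

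Fix $w$. The evaluation map $f \mapsto f(w)$ identifies $V_w$ with the subspace of $i(\chi)$ consisting of vectors making the formula $f(pwi) = \delta^{1/2}(p)\pi_\chi(p) f(w)$ well-defined on $\P w I$. Writing down the ambiguity, $f(w)$ must satisfy $\delta^{1/2}(p)\pi_\chi(p) f(w) = f(w)$ for every $p$ in the image in $\M$ of $\P \cap w I w^{-1}$. Since $\delta$ is trivial on compact subgroups of $\M$, the constraint reduces to requiring that $f(w)$ be fixed by this image under $\pi_\chi$.

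Next I would show that the image of $\P \cap w I w^{-1}$ in $\M$ is exactly $\M \cap K$, independent of $w$. One inclusion uses that the lift of $w$ lives in $K$ (from the choices made in Section~\ref{metaplectic}), so that $M \cap K$ is normalised by $w$ and is contained in $I$; hence each $m \in \M \cap K$ satisfies $w^{-1} m w \in I$ and therefore $m \in \P \cap w I w^{-1}$. The opposite inclusion uses that $w I w^{-1} \subset K$ (again because $w \in K$), so $\P \cap w I w^{-1} \subset \P \cap K$; applying the Iwahori factorisation $P \cap K = (M \cap K)(U \cap K)$ forces the $M$-component of any such element to lie in $M \cap K$.

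Combining these steps, $V_w$ is identified with $i(\chi)^{\M \cap K}$, which is one-dimensional by Theorem~\ref{kfixed}. Summing over $W$ yields $\dim I(\chi)^I = |W|$. The main technical point is the identification of the image of $\P \cap w I w^{-1}$ in $\M$ with $\M \cap K$, which rests on the Iwahori factorisation of $P \cap K$ and on the fact that our Weyl-group lifts lie inside $K$; everything else is routine manipulation with the induced representation.
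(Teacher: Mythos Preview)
Your argument is correct and uses the same Iwahori--Bruhat decomposition $\G=\bigsqcup_{w\in W}\P wI$ as the paper. The paper's proof is terser: it asserts the upper bound $\dim I(\chi)^I\le |W|$ directly from the double-coset count (leaving implicit the constraint on $f(w)$ that you spell out), and then obtains the lower bound by exhibiting explicit linearly independent elements $\phi_w$, namely the restrictions of the spherical vector $\phi_K$ to each double coset $\P wI$. Your version instead computes each $V_w$ exactly, identifying it with $i(\chi)^{\M\cap K}$ via Theorem~\ref{kfixed}; this is a cleaner justification of the upper bound (which in the paper's phrasing would a priori only give $|W|\cdot\dim i(\chi)$), while the paper's explicit functions $\phi_w$ have the side benefit of previewing the construction of $\phi_{w_0}$ used immediately afterward.
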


\begin{proof}
One has $\P \lqt \G/I\cong W$. %[If $C$ is the alcove corresponding to $I$ and $A$ is the  apartment containing $C$, then we can use $P$ to map $gC$ into $A$, but only up to translation by an element of $X_*(S)$. The equivalence classes of alcoves under this translation action is canonically isomorphic to $W$.]
Thus $\dim(I(\chi)^I)\leq |W|$.

For any $w\in W$, we can define $\phi_w\in I(\chi)^I$ by $\phi_w(g)=\phi_K(g)$ if $g\in \P w I$ and $\phi_w(g)=0$ otherwise. These functions are obviously linearly independent so the proposition is proved.
\end{proof}

Let $w_0$ denote the longest element in $W$. Let $\phi_{w_0}^{(\chi)}$ be the function in $I(\chi)$ supported on $\P w_0 I$ and taking the value $v_0$ at $w_0$.

\begin{proposition}\cite[Lemma 5]{chintaoffen}\label{basis}
A basis for the space $I(\chi)^I$ can be given by the elements $T_w\phi_{w_0}^{(w^{-1}\chi)}$ as $w$ ranges over $W$.
\end{proposition}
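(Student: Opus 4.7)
The plan is to reduce the statement to a triangularity argument. The previous proposition gives a basis $\{\phi_v^{(\chi)}\}_{v\in W}$ of $I(\chi)^I$ indexed by the Bruhat cells $\P v I$. Since the proposition to be proved asserts that $\{T_w\phi_{w_0}^{(w^{-1}\chi)}\}_{w\in W}$ is a basis and the index sets have the same cardinality $|W|$, it suffices to check that (i) each $T_w\phi_{w_0}^{(w^{-1}\chi)}$ lies in $I(\chi)^I$, and (ii) the vectors are linearly independent. Step (i) is automatic: $T_w$ is $\G$-equivariant and $\phi_{w_0}^{(w^{-1}\chi)}$ is already $I$-fixed. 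For (ii) I would compute the cell-by-cell support by expanding
\[
(T_w\phi_{w_0}^{(w^{-1}\chi)})(v)=\int_{U_w}\phi_{w_0}^{(w^{-1}\chi)}(w^{-1}uv)\,du
\]
for representatives $v\in W$ of the double cosets $\P\lqt \G/I$, and identifying a distinguished ``leading'' $v$ at which this integral is non-zero while vanishing on cells lying ``above'' it in an appropriate Bruhat-type order on $W$.

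The support computation proceeds from the observation that $\phi_{w_0}^{(w^{-1}\chi)}(w^{-1}uv)\ne 0$ iff $w^{-1}uv\in \P w_0 I$, i.e. $v\in u^{-1}w\P w_0 I$. Writing $U_w$ in coordinates and using the fine Bruhat decomposition $\G=\bigsqcup_x \P x I$, together with the standard description of $w^{-1}U_w w$ as the product of affine root groups reversed by $w$, one isolates a single $v$ for which $w^{-1}U_w v$ meets $\P w_0 I$ in an open subset while $w^{-1}U_w v'$ fails to meet $\P w_0 I$ for every $v'$ strictly ``later'' than it. Call this $v=v(w)$; the bijectivity of $w\mapsto v(w)$ on $W$ is a combinatorial fact from the Bruhat order. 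Non-vanishing at $v(w)$ is seen by running the same $v_0$-coefficient argument as in the proof of Theorem \ref{gk}: the leading contribution comes from the integrand being $v_0$ times the characteristic function of a non-empty compact set.

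Assembling these computations, the matrix expressing $(T_w\phi_{w_0}^{(w^{-1}\chi)})_{w\in W}$ in the basis $(\phi_v^{(\chi)})_{v\in W}$ is triangular (after ordering $W$ by $w\mapsto v(w)$) with non-zero diagonal entries, so the family is linearly independent and hence a basis.

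The main obstacle is the support analysis: pinning down exactly which cell $\P v I$ yields a non-vanishing contribution for a given $w$, and verifying the triangular pattern globally in $W$. In the non-metaplectic setting this is classical, but here one has to check that the $\mu_n$-cocycle on $\G$ does not spoil the non-vanishing of the leading term, which ultimately reduces again to the argument used for $c_w(\chi)\ne 0$ in Theorem \ref{gk}.
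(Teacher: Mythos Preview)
Your approach is essentially the same as the paper's: a triangularity argument showing that $f_w:=T_w\phi_{w_0}^{(w^{-1}\chi)}$ vanishes at $v\in W$ unless $\ell(w^{-1})+\ell(v)\geq\ell(w_0)$ (via the Tits-system fact that $Pw^{-1}PvP$ meets $Pw_0P$ only under this inequality, combined with $\P w_0 I\subset \P w_0\P$), and is nonzero at the unique extremal cell $v(w)=ww_0$ because the integrand there equals $v_0$ on a compact set of positive measure. The cocycle plays no role in that non-vanishing step, so your final concern is unnecessary.
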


\begin{proof}
Let us write $f_w$ for the function $T_w\phi_{w_0}^{(w^{-1}\chi)}$. Suppose that $f_w(v)\neq 0$ for some $v\in W$. By definition,
\[
 f_w(v)=\int_{U_w}\phi_{w_0}^{(w^{-1}\chi)} (w^{-1}uv) du.
\]
For this to be nonzero, it is necessary that $w^{-1} U v\cap \P w_0 I\neq\emptyset$. Since the group $I$ admits an Iwahori factorisation, we have the containment $\P w_0 I\subset \P w_0 \P$. Now $w^{-1} U v\subset Pw^{-1}PvP$ and since these are double cosets in a Tits system, this intersects $Pw_0P$ non-trivially only if $\ell(w^{-1})+\ell(v)\geq \ell(w_0)$. Furthermore, if there is equality $\ell(w^{-1})+\ell(v)= \ell(w_0)$, the intersection is non-empty if and only if $w^{-1}v=w_0$.

From the above considerations, it suffices to show that $f_w(ww_0)\neq 0$. 
Since the support of $\phi_{w_0}$ is $\P w_0I$, we need to understand when $w^{-1}uww_0\in Pw_0I$ with $u\in U_w$. 
We rewrite this as $w^{-1}uw\in P\cdot w_0 I w_0^{-1}$. As $w^{-1}uw\in U$ and the group $w_0Iw_0^{-1}$ admits and Iwahori factorisation, this only happens when $w^{-1}uw\in (w_0 I w_0^{-1})\cap U^-$. For such $u$, $w^{-1}uw w_0\in w_0 I$ and hence $\phi_{w_0}^{(w^{-1}\chi)}(w^{-1}uw w_0)=v_0$.

Therefore the above integral for $f_w(ww_0)$ reduces to an integral of $v_0$ over a compact subset of $U_w$ with positive measure, hence the integral is nonzero, as required.
\end{proof}

\begin{proposition}\label{sphericalexpansion}
The spherical function $\phi_K$ can be expanded as
\begin{equation*}
\phi_K=\sum_{w\in W} c_{w_0}( w^{-1} \chi) \conj T_w\phi_{w_0}^{(w^{-1}\chi)}.
\end{equation*}
\end{proposition}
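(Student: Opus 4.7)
The plan is to expand $\phi_K^{(\chi)}$ in the basis of $I(\chi)^I$ furnished by Proposition~\ref{basis} and identify the coefficients by evaluation at Weyl-group representatives.

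Since $\phi_K^{(\chi)} \in I(\chi)^K \subseteq I(\chi)^I$, Proposition~\ref{basis} gives a unique expansion
\[
 \phi_K^{(\chi)} = \sum_{w\in W} b_w(\chi)\, \conj T_w\, \phi_{w_0}^{(w^{-1}\chi)}
\]
with $b_w(\chi) \in B$, and the task is to show $b_w(\chi) = c_{w_0}(w^{-1}\chi)$. I would do this by evaluating both sides at each Weyl element $v$ (viewed as a lift in $K$ via our chosen splitting); the left hand side is then $v_0$ by right $K$-invariance. The triangularity established in the proof of Proposition~\ref{basis} gives $T_{w'}\phi_{w_0}^{(w'^{-1}\chi)}(v) = 0$ unless $\ell(w') \geq \ell(w_0) - \ell(v)$, with equality forcing $w' = v w_0$. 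Ordering the resulting equations by $\ell(v)$ from $0$ upward, one obtains a triangular linear system: the equation at $v$ uniquely determines $b_{v w_0}(\chi)$ in terms of the ``diagonal'' value $\conj T_{v w_0}\phi_{w_0}^{((v w_0)^{-1}\chi)}(v)$ together with those $b_{w'}$ with $\ell(w') > \ell(v w_0)$, which have already been computed at earlier steps.

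The key computational input is the diagonal integral
\[
 T_w\phi_{w_0}^{(w^{-1}\chi)}(w w_0) = \int_{U_w} \phi_{w_0}^{(w^{-1}\chi)}(w^{-1} u w w_0)\, du.
\]
Under the change of variable $u \mapsto w u w^{-1}$, this identifies $U_w$ with a subset of $U^-$, and the support condition that the argument lie in $\P w_0 I$, combined with the Iwahori factorization of $I$, cuts out an explicit open-compact piece of the domain. On this piece, the integrand is governed by the $\delta^{1/2}\pi_{w^{-1}\chi}$-equivariance, and the resulting integral has precisely the shape of the intertwining integral computed for $c_w$ in the proof of Theorem~\ref{gk}. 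Back-substitution into the triangular system, together with the cocycle identity $c_{w_0}(w^{-1}\chi) = c_{w_0 w^{-1}}(\chi)\, c_w(w^{-1}\chi)$ coming from the length-additive factorisation $w_0 = (w_0 w^{-1})\, w$ and the composition law underlying Proposition~\ref{whomomorphism}, yields $b_w(\chi) = c_{w_0}(w^{-1}\chi)$.

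The main obstacle is the combinatorial bookkeeping in the back-substitution: one must track the off-diagonal values $\conj T_{w'}\phi_{w_0}^{(w'^{-1}\chi)}(v)$ for $\ell(w') > \ell(v w_0)$ and verify that the contributions telescope into the clean formula $b_w = c_{w_0}(w^{-1}\chi)$. In the split setting this is carried out by Chinta and Offen in \cite[Lemma 6]{chintaoffen}; thanks to the careful choice of the $M\cap K$-fixed vector $v_0$ and the intertwining normalisation established earlier in this section, nothing about the metaplectic central extension introduces new obstructions, and their argument transfers to the present setting without essential change.
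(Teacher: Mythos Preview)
Your strategy of evaluating at every Weyl element and solving a triangular system is different from, and substantially harder than, the paper's argument. The paper avoids the off-diagonal bookkeeping entirely by exploiting the normalised intertwining operators: apply $\conj T_u$ to the expansion
\[
\phi_K^{(\chi)}=\sum_{w\in W} d_w(\chi)\,\conj T_w\phi_{w_0}^{(w^{-1}\chi)},
\]
use $\conj T_u\phi_K^{(\chi)}=\phi_K^{(u\chi)}$ from (\ref{one}) and $\conj T_u\conj T_w=\conj T_{uw}$ from Proposition~\ref{whomomorphism}, and compare coefficients in the basis of $I(u\chi)^I$ to obtain the functional equation $d_w(\chi)=d_{uw}(u\chi)$. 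Thus \emph{one} coefficient, say $d_{w_0}(\chi)$, determines all of them. Evaluating the expansion at $1$ and invoking the same support argument you cite from Proposition~\ref{basis} (only the $w=w_0$ term survives, since $w^{-1}u\in\P w_0 I$ forces $w=w_0$, $u\in U\cap I$) then gives $d_{w_0}(\chi)=c_{w_0}(w_0^{-1}\chi)$ directly, with no back-substitution.

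Your proposed route is not wrong in principle, but it is incomplete: you never compute the off-diagonal values $\conj T_{w'}\phi_{w_0}^{(w'^{-1}\chi)}(v)$ and you do not actually show the telescoping you assert. Your appeal to \cite[Lemma 6]{chintaoffen} is also misplaced, since the paper is explicitly following \cite[\S 3.3]{chintaoffen} and Chinta--Offen almost certainly use the same intertwining-operator symmetry rather than a brute-force triangular inversion. The moral is that the identity (\ref{one}) and the homomorphism property (\ref{wh}) are doing real work here: they convert what would be a messy linear-algebra problem into the determination of a single coefficient.
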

\begin{proof}
By the previous proposition, there exist $d_w(\chi)$ such that \begin{equation}\label{dwchidefn}
\phi_K^{(\chi)}=\sum_{w\in W} d_w(\chi) \conj T_w\phi_{w_0}^{(w^{-1}\chi)}.
\end{equation} Let us apply $\conj T_u$ to this equation. Via (\ref{one}) and (\ref{wh}), we arrive at

$$ \sumw d_w(u\chi) \conj T_w \phi_{w_0}^{(w^{-1}u\chi)} = \sumw d_w(\chi) \conj T_{uw} \phi_{w_0}^{(w^{-1}u\chi)}.$$

Again using the fact we have a basis of $I(u\chi)^I$, we compare coefficients to obtain $d_w(\chi)=d_{uw}(u\chi)$. Thus it suffices to establish the value of $d_{w_0}(\chi)$. We now evaluate (\ref{dwchidefn}) at the identity.

$$\phi_K^{(\chi)}(1)=\sum_{w\in W} d_w(\chi)c_{w_0}(w\chi)^{-1} \int_{U_w} \phi_{w_0}^{(w^{-1}\chi)}(w^{-1}u) du. $$

As in the proof of Proposition \ref{basis}, $w^{-1}u\in Pw_0 I$ if and only if $w=w_0$ and $u\in U\cap I$. Thus only one term survives in this sum, and the surviving integral is the integral of $v_0$ over a set of measure one. We end up with $d_{w_0}(\chi)=c_{w_0}(w_0^{-1}\chi)$ which implies the result.
\end{proof}

%%%%%%%%%%%%%%%%%%%%%%%%%%%%%%%%%%%%%%%%%%%%%%%%%%%%%%%
\section{Whittaker functionals}%%%%%%%%%%%%%%%%%%%%%%%%
%%%%%%%%%%%%%%%%%%%%%%%%%%%%%%%%%%%%%%%%%%%%%%%%%%%%%%%

A group homomorphism $\psi:U\to \C^\times$ is said to be an unramified character if for each simple reflection $s\in W$, the restriction of $\psi$ to the inersection $U_s=U\cap M_s$ with the corresponding Levi subgroup $M_s$ is trivial on $U_s\cap K$ and non-trivial on all open subgroups of $U_s$ with a larger abelianisation than $U_s\cap K$. We fix one such unramified character $\psi$. By abuse of notation we also use $\psi$ to denote the corresponding group homomorphism from $U^-$ to $\C^\times$ obtained by precomposing $\psi$ with conjugation by $w_0$.

\begin{definition}
A Whittaker functional on a representation $(\pi,V)$ of $\G$ is a linear functional $W$ on $V$ such that $W(\pi(u)v)=\psi(u)v$ for all $u\in U$ and $v\in V$.
\end{definition}

\begin{theorem}\label{whittakerfunctionals}
There is an isomorphism between $i(\chi)^*$ and the space of Whittaker functionals on $I(\chi)$ given by $\la\mapsto W_\la$ with
$$
W_\la(\phi) = \la\left( \int_{U^-} \phi(uw_0) \psi(u) du \right).
$$
\end{theorem}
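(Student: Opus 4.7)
The strategy has three steps: (i) verify that the formula $W_\la(\phi)$ defines a Whittaker functional by direct computation, (ii) show that $\la\mapsto W_\la$ is injective, and (iii) compute $\dim\Hom_U(I(\chi),\psi)=\dim i(\chi)$ by a Mackey/Bruhat argument, forcing the injection to be an isomorphism.

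For step (i), convergence of the integral $\int_{U^-}\phi(uw_0)\psi(u)\,du$ for $\chi$ in a suitable cone of the dual torus is standard (dominated by the same type of geometric series estimates used in the proof of Theorem \ref{gk}), and the general case follows by meromorphic continuation in $\chi$. To verify the Whittaker identity $W_\la(\pi(v)\phi)=\psi(v)W_\la(\phi)$ for $v\in U$, I would substitute $u\mapsto u(w_0 v^{-1}w_0^{-1})$; this preserves Haar measure on $U^-$ and, via Theorem \ref{unipotentsplitting}, the canonical splittings of $U$ and $U^-$ in $\G$ are compatible with conjugation by $w_0$, so no metaplectic cocycle correction appears. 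One then reads off the transformation by $\psi$.

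For step (ii), it suffices to show the linear map $L\map{I(\chi)}{i(\chi)}$ sending $\phi$ to $\int_{U^-}\phi(uw_0)\psi(u)\,du$ is surjective. Given $v\in i(\chi)$, I would take $\phi$ supported in a small compact neighbourhood of $w_0$ inside the big Bruhat cell $\P w_0\U^-$, with $\phi(w_0)=v$; by shrinking the support enough that $\psi$ is trivial on it, the integral equals a nonzero scalar times $v$, giving surjectivity and hence injectivity of $\la\mapsto W_\la$.

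For step (iii), Frobenius reciprocity identifies $\Hom_U(I(\chi),\psi)$ with the space of $U$-equivariant maps out of $\Ind_{\P}^{\G}(\pi_\chi)$, and the Bruhat decomposition $\G=\coprod_{w\in W}\P w \U$ gives a Mackey filtration whose $w$-th graded piece is a $\Hom$-space for $w^{-1}\P w\cap U$. For any $w\neq w_0$ some simple positive root $\alpha$ satisfies $w\alpha>0$, so the whole root group $U_\alpha$ lies in $w^{-1}\P w\cap U$; the corresponding induced representation is trivial there while $\psi|_{U_\alpha}$ is nontrivial by our unramified hypothesis on $\psi$, killing that contribution. Only $w=w_0$ survives; since $w_0^{-1}\P w_0\cap U=\{1\}$ its contribution is $i(\chi)^*$, matching the dimension of the injection from (ii). I expect the main obstacle to be making the Mackey/Jacquet argument rigorous in the present metaplectic $p$-adic setting (carefully controlling the filtration and the vanishing on non-big cells), together with justifying that the meromorphic continuation in (i) yields a nonzero Whittaker functional at every $\chi$, rather than merely a generic one.
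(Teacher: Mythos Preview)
Your proposal is correct and substantially more detailed than the paper's own proof, which consists of the single sentence ``This follows from \cite[Theorem 5.2]{bz}.'' The Bernstein--Zelevinsky result being invoked is the geometric lemma on the Bruhat filtration of parabolically induced representations, and your step (iii) is precisely that argument specialised to $(U,\psi)$-coinvariants: the filtration of $I(\chi)$ by Bruhat cells, with every cell except the big one killed by the nondegeneracy of $\psi$, leaving a contribution isomorphic to $i(\chi)^*$. So at the structural level your approach and the paper's coincide; you are simply unpacking the citation. Your steps (i) and (ii) go beyond the paper in checking that the explicit integral formula realises the abstract isomorphism, which the paper does not address at all. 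The convergence concern you flag is genuine; one clean way to avoid meromorphic continuation is to note that the integral is manifestly convergent on the subspace of $\phi$ supported in the big cell $\P w_0 U$, and then use the dimension count from (iii) to see that any Whittaker functional is determined by its restriction to that subspace, so the functional extends uniquely to all of $I(\chi)$ for every $\chi$.
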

\begin{proof}
This follows from \cite[Theorem 5.2]{bz} and the Bruhat decomposition.
\end{proof}

Define $W^{(\chi)}:I(\chi)\to i(\chi)$ by
\[
 W^{(\chi)}(\phi) = \int_{U^-}\phi(uw_0) \psi(u) du.
\]
We use $R(g)$ to denote the action of $g\in\G$ on a function by right translation.
A group element $t\in \M$ is said to be dominant if $t(U\cap K)t^{-1}\subset K$.

% As $a$ runs through such a set of coset representatives, the vectors $\pi_\chi(a)v_0$ form a basis of $i(\chi)$. We will write $\la_a^{(\chi)}$ for the corresponding dual basis of $i(\chi)^*$ and let $W_a^{(\chi)}$ be the Whittaker functional corresponding to $\la_a^{(\chi)}$ under the bijection of Theorem \ref{whittakerfunctionals}. The functional $\la_a^{(\chi)}$ depends only on $a$ and not on the choice of a set of coset representatives including $a$.
% 
% 
% For any $a$ and any $w$, the composite $W_a^{(w\chi)}\circ \conj T_w$ is also a Whittaker functional on $I(\chi)$. Thus it can be expanded in any basis we have, so we define coefficients $\tau_{a,b}^{(w,\chi)}$ by
% $$
% W_a^{(w\chi)}\circ \conj T_w=\sum_b \tau_{a,b}^{(w,\chi)} W_b^{(\chi)}.
% $$
% 
% We need to know how these coefficients change under a change of coset representatives. For $h\in H$, we have
% $$
% \tau_{a,bh}^{(w,\chi)}=\chi(h) \tau_{a,b}^{(w,\chi)} \quad {\rm and} \quad \tau_{ah,b}^{(w,\chi)}=\frac{\tau_{a,b}^{(w,\chi)}}{(w\chi)(h)}.
% $$
% As a consequence, after choosing an extension of $\chi$ to a function on $\M$ satisfying $\chi(mh)=\chi(m)\chi(h)$ for $h\in H$, the quantity
% $$
% \widetilde\tau_{a,b}^{(w,\chi)}=\frac{(w\chi)(a)}{\chi(b)} \tau_{a,b}^{(w,\chi)}
% $$
% is independent of the choice of coset representatives, only depending on the cosets of $a$ and $b$ (and also depending on the choice of extension of $\chi$).

\begin{lemma}\cite[Lemma 7]{chintaoffen}\label{colemma7}
Let $t\in\M$. Then
\[
 W^{(\chi)}( R(t)\phi_{w_0}^{(\chi)}) = \begin{cases}
                                         \delta^{1/2}(t)\pi_\chi(w_0 t w_0^{-1}) v_0 & \text{if $t$ is dominant} \\
0 & \text{otherwise} \\
                                        \end{cases}
\]
\end{lemma}
\begin{proof}
Let $t^*=w_0 tw_0^{-1}$ and $u'=(t^*)^{-1}ut^*\in U^-$. Since the group $I$ admits an Iwahori factorisation, we have $u'w_0\in \P w_0 I$ if and only if $u'\in U^-\cap w_0 I w_0^{-1}$. Therefore
\[
\phi_{w_0}^{(\chi)} (uw_0 t)= \phi_{w_0}^{(\chi)} (t^* u' w_0) 
= \delta^{1/2}(t^*) \pi_\chi(t^*) \phi_{w_0}^{(\chi)} (u'w_0).
\]
Since the support of $\phi_{w_0}^{(\chi)}$ is $\P w_0 I$, our above remark shows that this expression vanishes unless $u'\in U^-\cap w_0 I w_0\inv$, which implies $u'w_0\in w_0 I$. Since $\phi_{w_0}^{(\chi)}$ is $I$-invariant on the right, $\phi_{w_0}^{(\chi)} (u'w_0)=\phi_{w_0}^{(\chi)} (w_0)$ under these circumstances. Therefore
\[
\phi_{w_0}^{(\chi)} (uw_0 t)= \begin{cases}
                                         \delta^{1/2}(t^*)\pi_\chi(t^*)v_0 & \text{if $u'\in U^-\cap w_0 I w_0^{-1}$} \\
0 & \text{otherwise}. \\
                                        \end{cases}
\]

Now we can compute
\begin{eqnarray*}
 W^{(\chi)}( R(t)\phi_{w_0}^{(\chi)}) &=& \int_{U^-} \phi_{w_0}^{(\chi)} (uw_0 t) \psi(u) du \\
&=& \delta^{1/2}(t^*) \pi_\chi(t^*)v_0 \int_{t^* (U^-\cap K)(t^*)^{-1}} \psi(u) du.
\end{eqnarray*}

This final integral vanishes unless $\psi$ is identically 1 on the subgroup being integrated over, which happens exactly when $t$ is dominant, by our assumption that $\psi$ is unramified. In this case, we get a factor of the volume of the group appearing, which is $\delta(t)=\delta^{-1}(t^*)$. This completes the proof.
% Since the group $I$ admits an Iwasawa decomposition, for any $u'\in U^-$, we have $u'w_0\in \P w _0I$ if and only if $u'\in U^-(O_F)$. We apply this to $u'=(t^*)^{-1}ut^*$, to obtain
% $$
% W_b^{(\chi)} ( R(t)\phi_{w_0}^{(\chi)} ) = \la_b^{(\chi)} \delta^{1/2}(t^*) \pi_\chi (t^*)v_0 \int_{t^* U^-(O_F)(t^*)^{-1}} \psi(u) du.
% $$
% The first factor vanishes unless $t^*\sim b$, when it is equal to $\delta^{1/2}(t^*)\chi(t^* b^{-1})$. The integral vanishes whenever $\psi$ is a non-trivial character on the group $t^* U^-(O_F)(t^*)^{-1}$. This occurs unless $t$ is dominant, in which case we get the relevant volume, namely $\delta(t)$, appearing as a factor.
\end{proof}

%%%%%%%%%%%%%%%%%%%%%%%%%%%%%%%%%%%%%%%%%%%%%%%%%%%%%%%%%%%%%%%
\section{Constructing the Chinta-Gunnells action}\label{saction}
%%%%%%%%%%%%%%%%%%%%%%%%%%%%%%%%%%%%%%%%%%%%%%%%%%%%%%%%%%%%%%%%

Let $\mathcal{E}$ be the holomorphic vector bundle on $Y$ whose fibre over a point $\chi\in Y$ is canonically isomorphic to $i(\chi)$. Its dual $\mathcal{E}^*$ is another holomorphic vector bundle on $Y$ whose fibre over a point $\chi$ is canonically isomorphic to $i(\chi)^*$. Let $\mathcal{F}$ be the holomorphic vector bundle on $Y$ whose fibre over a point $\chi$ is canonically isomorphic to the space of Whittaker functionals on $I(\chi)$.

Theorem \ref{whittakerfunctionals} provides us with an isomorphism between $\mathcal{E}^*$ and $\mathcal{F}$.

%On $Y$ we have holomorphic vector bundles $\underline{i(\chi)}$, $\underline{i(\chi)^*}$ and $\W$, whose fibres over a point $\chi\in Y$ are respectively $i(\chi)$, $i(\chi)^*$ and the space of Whittaker functionals on $I(\chi)$. Theorem \ref{whittakerfunctionals} furnishes us with an isomorphism between $\underline{i(\chi)^*}$ and $\W$.

Let $V$ be the open dense subset of $Y$ where the rational functions $c_w(\chi)$ have neither zeroes nor poles.% A character $\chi$ is said to be regular if its orbit under $W$ contains $|W|$ distinct characters.

For any $\chi\in V$, the space $\End(I(\chi))$ is one dimensional by \cite[Theorem 7.1(2)]{mcn}, which is valid in this more general context with the same proof. Hence this endomorphism space consists only of scalars. The map $\overline{T}_s^2$ is an endomorphism of $I(\chi)$ sending $\phi_K$ to itself, hence must be the identity. 

This fact allows us to define an action of the Weyl group $W$ on the total space of $\mathcal{F}|_V$.
For $w\in W$ and $\mathcal{W}$ a point of the total space of $\mathcal{F}|_V$, let
\[
 w\cdot \mathcal{W} = \mathcal{W} \circ \overline{T}_w.
\]
If $\mathcal{W}$ is a Whittaker functional on $I(\chi)$, then the composite $\mathcal{W}\circ \overline{T}_w$ is easily seen to be a Whittaker functional on $I(w^{-1}\chi)$.

Via the isomorphism from $\mathcal{F}$ to $\mathcal{E}^*$, we transport the $W$ action on the total space of $\mathcal{F}|_V$ to a $W$ action on the total space of 
$\mathcal{E}^*|_V$. This is not related to any more naive action on $\mathcal{E^*}$. From our point of view, the Chinta-Gunnells action will be the induced $W$-action on the space of algebraic global sections $\Gamma(V,\mathcal{E}^*)$. We will explain in the final section of this paper why this agrees with the action constructed in \cite{cg}.

Note that the fact that the $W$-action is algebraic has not yet been justified. We postpone the verification of this fact until the end of section \ref{parttwo}.

\section{Formal computation of the Whittaker function}
%%%%%%%%%%%%%%%%%%%%%%%%%%%%%%%%%%%%%%%%%%%%%%%%%%%%%%%%%%%%%%%

Our aim is to compute the general spherical Whittaker function
\[
 \W(g)=W^{(\chi)}(R(g)\phi_K).
\]
This is
% There is another basis of $i(\chi)^*$ in the unramified case that is more amenable to calculation than the one we have so far considered. 
% It is parametrised by extensions of $\chi$ to a character $\tilde\chi$ of $X_*(S)$, and only depends on the choice of a lift of the lattice $X_*(S)$ to a subgroup of $\G$. In this way, given a coset representative as in the previous section of the form $\p^\la$, we can meaningfully talk about $\tilde\chi(a)$.
% 
% In general, we have to be more circumspect, and do not get anything approaching a distinguished choice of basis. As in the previous section, we consider an extension $\tilde\chi$ of $\chi$ to a function on $\M$ satisfying $\tilde\chi(mh)=\tilde\chi(m)\chi(h)$ for $m\in\M$ and $h\in H$. We consider the functional $\la_{\tilde\chi}$ on $i(\chi)$ defined by $\la_{\tilde\chi}(\pi_\chi(a)v_0)=\tilde\chi(a))$, and will compute the corresponding Whittaker function.
% 
% In the unramified case, choosing $\tilde\chi(\p^\la)=\tilde\chi(\la)$ yields a basis of $i(\chi)^*$ as $\tilde\chi$ runs over the set of all extensions of $\chi$ to $X_*(S)$. Let us now fix for once and all a particular extension $\tilde\chi$ of $\chi$. By abuse of notation, we will simply write $\chi$ for this extension throughout.
% 
% The Whittaker function which we we are aiming to compute is the function
% $$
% \W(g)=W_{\la_\chi}^{(\chi)}(R(g)\phi_K).
% $$
an $i(\chi)$-valued function on $\G$ satisfying
$$
\W(\zeta u g k)=\zeta \psi(u) \W(g)
$$
for all $\zeta\in \mu_n$, $u\in U$, $g\in \G$ and $k\in K$. The Iwasawa decomposition takes the form $G=UMK$, so in order to compute $\W$, it suffices to know the
values taken by $\W$ on $\M$. This is what we shall concentrate our efforts on.

\begin{theorem}\label{formal}
 Suppose $\la\in i(\chi)^*$ and $t\in \M$. The Whittaker function $\W(t)$ vanishes unless $t$ is dominant. If $t$ is dominant, then we have
$$
\la(\W(t))=\delta^{1/2}(t) \sum_{w\in W} c_{w_0} (w^{-1}\chi ) (w\cdot \la)( \pi_{w^{-1}\chi}(w_0 t w_0^{-1})v_0 ).
$$
\end{theorem}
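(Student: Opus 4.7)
The plan is to combine the spherical expansion from Proposition \ref{sphericalexpansion} with the evaluation in Lemma \ref{colemma7}, and then read off the resulting sum as the Chinta-Gunnells action applied to $m_t$.

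Beginning with $\mathcal{W}_\chi(t) = W_{\lambda_\chi}^{(\chi)}(R(t)\phi_K^{(\chi)})$, I would substitute Proposition \ref{sphericalexpansion} to write
\[
\mathcal{W}_\chi(t) = \sum_{w\in W} c_{w_0}(w^{-1}\chi)\, W_{\lambda_\chi}^{(\chi)}\bigl(R(t)\,\overline{T}_w \phi_{w_0}^{(w^{-1}\chi)}\bigr).
\]
Because the intertwining operators are defined by a left integration they commute with right translation, so $R(t)\overline{T}_w = \overline{T}_w R(t)$ and each summand becomes $(W_{\lambda_\chi}^{(\chi)}\circ \overline{T}_w)\bigl(R(t)\phi_{w_0}^{(w^{-1}\chi)}\bigr)$. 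Next, expand $\lambda_\chi = \sum_a \chi(a)\lambda_a^{(\chi)}$ in the dual basis, yielding $W_{\lambda_\chi}^{(\chi)} = \sum_a \chi(a) W_a^{(\chi)}$, and apply the defining relation of the $\tau$-coefficients (with $w^{-1}\chi$ playing the role of $\chi$) to get
\[
W_{\lambda_\chi}^{(\chi)}\circ \overline{T}_w = \sum_{a,b} \chi(a)\,\tau_{a,b}^{(w,w^{-1}\chi)}\, W_b^{(w^{-1}\chi)}.
\]
Now invoke Lemma \ref{colemma7}: the inner evaluations $W_b^{(w^{-1}\chi)}(R(t)\phi_{w_0}^{(w^{-1}\chi)})$ all vanish unless $t$ is dominant (giving the first half of the theorem), and for dominant $t$ only the unique $b=b_0$ with $b_0 \sim t^*$ survives, contributing $\delta^{1/2}(t)(w^{-1}\chi)(t^* b_0^{-1})$. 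After these reductions,
\[
\mathcal{W}_\chi(t) = \delta^{1/2}(t)\sum_w c_{w_0}(w^{-1}\chi)\sum_a \chi(a)\,\tau_{a,b_0}^{(w,w^{-1}\chi)}\,(w^{-1}\chi)(t^* b_0^{-1}).
\]

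The final step is to recognise the inner sum as $(w\circ m_t)(\chi)$. Applying equation (\ref{actiond}) with $\chi$ replaced by $w^{-1}\chi$ and using that $\pi_b(m_t)$ is supported on the coset of $t^*$, the right-hand side of the theorem reads
\[
\delta^{1/2}(t)\sum_w c_{w_0}(w^{-1}\chi)\sum_a \widetilde\tau_{a,b_0}^{(w,w^{-1}\chi)}\,\pi_{b_0}(m_t)(w^{-1}\chi).
\]
Unpacking $\widetilde\tau_{a,b_0}^{(w,w^{-1}\chi)}=\chi(a)(w^{-1}\chi)(b_0)^{-1}\tau_{a,b_0}^{(w,w^{-1}\chi)}$ and using $m_t(\chi)=\chi(t^*)$, the factors $(w^{-1}\chi)(b_0)^{-1}$ and $(w^{-1}\chi)(t^*)$ combine into $(w^{-1}\chi)(b_0^{-1}t^*)=(w^{-1}\chi)(t^* b_0^{-1})$, and everything matches the formula obtained above.

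The main obstacle is the precise bookkeeping of the (non-canonical) extension of $\chi$ to $\widetilde{M}$ and the compatibility of that extension with the $W$-action and with the projections $\pi_b$. In particular one needs to verify that the change of coset representative rules for $\tau$ and the normalisation in the definition of $\widetilde\tau$ are exactly what is needed to absorb the factor $(w^{-1}\chi)(b_0)$ coming from Lemma \ref{colemma7}; this is what forces the definition of the tilded coefficients and underlies why the Chinta-Gunnells action, rather than the naive $W$-action, is the correct object appearing on the right.
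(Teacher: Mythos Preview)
Your argument is correct and follows essentially the same route as the paper's own proof: expand $\phi_K$ via Proposition \ref{sphericalexpansion}, expand $W_{\la_\chi}$ in the basis $W_a^{(\chi)}$, introduce the $\tau$-coefficients, apply Lemma \ref{colemma7}, and then identify the result with $(w\circ m_t)(\chi)$ through the definition of $\widetilde\tau$ and equation (\ref{actiond}). The only cosmetic differences are the order in which you unfold the two expansions and that you single out the unique surviving coset representative $b_0$ whereas the paper keeps the sum over $b$ with a Kronecker delta and absorbs it directly into $\pi_b(m_t)$.
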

\begin{proof}
Using Lemma \ref{sphericalexpansion}, the fact that $T_w$ is an intertwining operator and the definition of the $W$-action, we compute
\begin{eqnarray*}
 \la(\W(t))&=& \la(W^{(\chi)} R(t) \phi_K ) \\
&=&
\sum_{w\in W} c_{w_0}(w^{-1}\chi)\la( W^{(\chi)}(R(t)\overline{T}_w (\phi_{w_0}^{(w^{-1}\chi)} ))) \\
&=&\sum_{w\in W} c_{w_0}(w^{-1}\chi)\la( W^{(\chi)}(\overline{T}_w (R(t)\phi_{w_0}^{(w^{-1}\chi)} ))) \\
&=&\sum_{w\in W} c_{w_0}(w^{-1}\chi)(w\cdot \la)( W^{(w^{-1}\chi)} ( R(t) \phi_{w_0}^{(w^{-1}\chi)}) )
\end{eqnarray*}
and the result now follows from Lemma \ref{colemma7}.
\end{proof}
\section{Setting up the explicit computation}\label{parttwo}
%%%%%%%%%%%%%%%%%%%%%%%%%%%%%%%%%%%%%%%%%%%%%%%%%%%%%%%%%%%%%%%%

Let $\Ga$ be a set of left coset representatives for $H$ in $\M$. 
A particularly nice, but still noncanonical, choice can be made whenever $G$ is unramified and this is carried out when $G$ is split in Section \ref{comparison}.
%When $G$ is unramified, it is possible to choose these coset representatives all of the form $\varpi^\la$ for some $\la\in X_*(S)$. Such a choice would be made for convenience, as opposed to necessity.
The vectors $\{\pi_\chi(a)v_0\}_{a\in\Ga}$ form a basis of $i(\chi)$. Incidentally, this yields a trivialisation of the vector bundle $\mathcal{E}$.

Let $\la_a^{(\chi)}$ denote the corresponding dual basis of $i(\chi)^*$, and let $W_a^{(\chi)}$ be the Whittaker functional corresponding to $\la_a^{(\chi)}$ under the bijection of Theorem \ref{whittakerfunctionals}.

We now introduce the change of basis coefficients as in \cite[\S I.3]{kp}.

As has already been mentioned, for any $a\in\Ga$ and any $w\in W$, the composition $W_a^{(w\chi)}\circ \conj T_w$ is a Whittaker functional on $I(\chi)$. We expand it in the basis $\{W_b^{(\chi)}\}_{b\in \Ga}$, defining coefficients $\tau_{a.b}^{(w,\chi)}$ by
\begin{equation}\label{taudefn}
 W_a^{(w\chi)}\circ \conj T_w=\sum_{b\in \Ga} \tau_{a,b}^{(w,\chi)} W_b^{(\chi)}.
\end{equation}

Any explicit computation of metaplectic Whittaker functions reduces via Theorem \ref{formal} to computing the Weyl group action, and hence these coefficients.

% In this section we lay the groundwork for the explicit calculation for unramified $G$ that will follow. As a side effect, we will also be able to provide a proof of the missing claim that $\tau_{a,b}^{(w,\chi)}\in B$.

Let $K_1$ be an open compact subgroup of $G$ normalised by $W$ and admitting an Iwahori factorisation with respect to $P$. 
For each $b\in \M$, define $f_b\in I(\chi)^{K_1}$ to be a function in this space supported on $\P w_0 K_1$ and taking the value $\pi_\chi(b)v_0$ at $w_0$.

\begin{lemma}
We have $W_a^{(\chi)}(f_b)=0$ unless $aH=bH$, in which case $W_a^{(\chi)}(f_a)=|U^-\cap K_1|$.
\end{lemma}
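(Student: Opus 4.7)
The plan is to unfold everything via the integral formula from Theorem \ref{whittakerfunctionals} and then isolate the integration to $U^-\cap K_1$ using the support of $f_b$ and the $W$-normality of $K_1$. Specifically, I would begin by writing
\[
W_a^{(\chi)}(f_b) \;=\; \la_a^{(\chi)}\!\left(\int_{U^-} f_b(uw_0)\,\psi(u)\,du\right),
\]
and reduce the problem to identifying the set of $u\in U^-$ for which $f_b(uw_0)\neq 0$ and computing $f_b(uw_0)$ there.

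Next I would pin down the support. Since $K_1$ is normalised by $W$ (in particular by $w_0$), the support condition $uw_0\in \P w_0 K_1$ is equivalent to $u\in \P K_1$. Applying the Iwahori factorisation $K_1=(K_1\cap U^-)(K_1\cap M)(K_1\cap U)$ and absorbing the $M$- and $U$-factors into $\P$ gives $\P K_1 = \P(K_1\cap U^-)$. Intersecting with $U^-$ and using $U^-\cap \P = \{1\}$ (a consequence of $P=MU$ being minimal parabolic, with $P^-\cap P = M$ and $M\cap U^- = \{1\}$) collapses the domain to $U^-\cap K_1$.

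Then I would evaluate $f_b$ on this domain. For $u\in U^-\cap K_1$, write $uw_0 = 1\cdot w_0\cdot (w_0^{-1}uw_0)$; by $W$-normality, $w_0^{-1}uw_0\in K_1$, so the decomposition places $uw_0$ in $\P w_0 K_1$ with trivial $\P$-part. The definition of $f_b$ therefore yields $f_b(uw_0)=\pi_\chi(b)v_0$, a constant vector on the integration domain. Combining with the fact that $\psi$, being unramified (extended to $U^-$ in the same way $\psi$ enters the Whittaker-functional formula of Theorem \ref{whittakerfunctionals}), is trivial on $U^-\cap K_1\subseteq U^-\cap K$, the integral reduces to $|U^-\cap K_1|\cdot \pi_\chi(b)v_0$.

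Finally, I apply $\la_a^{(\chi)}$: by definition it is dual to the basis $\{\pi_\chi(c)v_0\}$ as $c$ ranges over the chosen coset representatives of $\M/H$, and hence annihilates $\pi_\chi(b)v_0$ whenever $b\not\sim a$, while at $a\sim b=a$ it evaluates to $1$. This yields the stated vanishing and the value $|U^-\cap K_1|$ at $b=a$. The only subtle steps are the compatibility of $\psi$ with the $U^-$-integration (requiring the usual identification of unramifiedness across $w_0$) and the well-definedness of $f_b$ as a function on $\P w_0 K_1$, which one typically ensures by choosing $K_1$ small enough that $\pi_\chi$ is trivial on $\P\cap K_1$; neither is difficult, but these are the points that warrant explicit verification.
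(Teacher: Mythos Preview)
Your proposal is correct and follows essentially the same route as the paper's (very terse) proof: the key step in both is that the Iwahori factorisation of $K_1$ gives $\P K_1\cap U^- = K_1\cap U^-$, collapsing the integration domain, after which the evaluation is routine. Your write-up simply makes explicit the details the paper leaves to the reader, including the roles of $W$-normality of $K_1$, the triviality of $\psi$ on $U^-\cap K_1$, and the dual-basis property of $\la_a^{(\chi)}$.
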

\begin{proof}
Suppose that $u\in U^-$ and $f_b(uw_0)\neq 0$. The support of $f_b$ is $\P w_0 K_1$ and since $K_1$ is invariant under conjugation by $w_0$ this implies that $u\in \P K_1$. As $K_1$ has the Iwahori factorisation $K_1=(\P\cap K_1)(U^-\cap K_1)$, we have $u\in \P(U^-\cap K_1)$. As $U^-\cap \P=\{1\}$, this forces $u\in U^-\cap K_1$.

Therefore
\[
 W_a^{(\chi)}(f_b)=\la_a^{(\chi)}\left(\int_{U^-} f_b(uw_0)\psi(u) du\right) = \la_a^{(\chi)}\left(\int_{U^-\cap K_1} \pi_\chi(b)v_0\psi(u) du\right).
\]
By the definition of $\la_a^{(\chi)}$, this is zero unless $aH=bH$ and when $a=b$, we are integrating the constant 1 over the group $U\cap K_1$ so we get the measure $|U\cap K_1|$ as the answer.
\end{proof}

The following corollary is immediate.

\begin{corollary}
$$
\tau_{a,b}^{(w,\chi)}=\frac{ (W_a^{(w\chi)}\circ \conj T_w)(f_b) }{ |U^-\cap K_1| }.
$$
\end{corollary}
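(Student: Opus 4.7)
The plan is to obtain the corollary by a direct unwinding of the defining expansion of the coefficients $\tau_{a,b}^{(w,\chi)}$, using the preceding lemma to isolate a single surviving term.

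First I would apply both sides of the defining identity
$$W_a^{(w\chi)}\circ \conj T_w=\sum_{b'} \tau_{a,b'}^{(w,\chi)} W_{b'}^{(\chi)}$$
(where $b'$ runs over the fixed set of coset representatives for $\M/H$) to the vector $f_b \in I(\chi)$. This produces the identity
$$(W_a^{(w\chi)}\circ \conj T_w)(f_b) = \sum_{b'} \tau_{a,b'}^{(w,\chi)}\, W_{b'}^{(\chi)}(f_b).$$

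Next I would invoke the lemma just established: $W_{b'}^{(\chi)}(f_b)$ vanishes unless $b' \sim b$, and among the chosen coset representatives this forces $b' = b$. Consequently the sum on the right collapses to the single term $\tau_{a,b}^{(w,\chi)}\, W_b^{(\chi)}(f_b)$, and the lemma also gives $W_b^{(\chi)}(f_b) = |U^-\cap K_1|$. Dividing through by $|U^-\cap K_1|$ yields the claimed formula.

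There is no real obstacle here; the only thing to be careful about is that the expansion of $\tau_{a,b'}^{(w,\chi)}$ is indexed by the same fixed choice of coset representatives used to define $f_b$, so that the ``diagonal'' term $b'=b$ is literally the one selected by the lemma rather than merely an $H$-translate of it. Once this book-keeping is in place the corollary is immediate.
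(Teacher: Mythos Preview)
Your proof is correct and is precisely the intended argument: the paper states this as an immediate corollary of the preceding lemma without further proof, and your unwinding of the defining expansion applied to $f_b$, with the lemma killing all but the $b'=b$ term, is exactly the one-line deduction it has in mind. Your remark about the coset-representative bookkeeping is the only point requiring care, and you have handled it correctly.
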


Now let us restrict our attention to the case where 
$w=s=s_\a$, a simple reflection corresponding to the simple coroot $\a$ of $G'$. From our definitios, the above corollary is equivalent to
\begin{equation}\label{cor9.2}
\tau_{a,b}^{(s,\chi)}=\frac{1}{ c_s(\chi) |U^-\cap K_1| }\la_a^{(s\chi)}\left (\int_{U^-}\int_{U_s} f_b(s^{-1}nuw_0)dn\psi(u)du \right).
\end{equation}

\begin{lemma}\label{fundamental}
Suppose that $n\in U_s$ is not equal to the identity, and $u\in U^-$. Then there is a unique $n'\in U_s^{-}$ such that $p(n):=s^{-1}nn'\in \P$. Furthermore, $s^{-1}n u w_0\in \P w_0 K_1$ if and only if $u=n'u'$ with $u'\in U^-\cap K_1$.
\end{lemma}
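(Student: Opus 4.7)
The plan is to handle the two claims separately, reducing each to a standard rank-one or Iwahori-factorisation calculation.

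For the existence and uniqueness of $n'$, I would work entirely inside the rank-one standard Levi $M_s$, which contains $s$, $U_s$, and $U_s^-$. Since $M_s \cap P$ is a Borel subgroup of $M_s$, the claim is exactly the open-cell part of the rank-one Bruhat decomposition: a factorisation $b_1 s b_2$ with $b_i \in M_s \cap P$ is unique, and for $n\in U_s\setminus\{1\}$ the equation $s^{-1}nn'\in M_s\cap P$ uniquely determines $n'\in U_s^-$. An explicit $SL_2$ calculation makes this transparent: with $n = \bigl(\begin{smallmatrix}1 & x\\ 0 & 1\end{smallmatrix}\bigr)$ for $x\neq 0$ one reads off $n' = \bigl(\begin{smallmatrix}1 & 0\\ -1/x & 1\end{smallmatrix}\bigr)$. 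The identity lifts unambiguously to $\G$ via the canonical unipotent splittings of Theorem \ref{unipotentsplitting}, yielding $p(n) = s^{-1}nn' \in \P$.

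For the second assertion, I would first use the factorisation from the first part to rewrite
$$
s^{-1}nuw_0 \in \P w_0 K_1 \iff (n')^{-1}u w_0 \in \P w_0 K_1 \iff (n')^{-1}u \in \P\cdot(w_0 K_1 w_0^{-1}).
$$
Since $K_1$ is normalised by $W$, $w_0 K_1 w_0^{-1}=K_1$; and since $(n')^{-1}u\in U^-$, the question reduces to identifying $\P K_1\cap U^-$. Here the Iwahori factorisation enters: from $K_1 = (K_1\cap U^-)(K_1\cap \M)(K_1\cap U)$ the last two factors absorb into $\P$, giving $\P K_1 = \P(K_1\cap U^-)$, and intersecting with $U^-$ (using that $\P\cap U^- = \{1\}$ inside $\G$ via the canonical unipotent splitting) yields $\P K_1\cap U^- = K_1\cap U^-$. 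Thus the condition is equivalent to $(n')^{-1}u\in U^-\cap K_1$, i.e.\ $u = n'u'$ with $u'\in U^-\cap K_1$. The whole chain consists of biconditionals, so both directions are proved at once.

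The main obstacle is really just bookkeeping on the metaplectic cover: every identity used here ($w_0 K_1 w_0^{-1}=K_1$, the Iwahori factorisation, and $\P\cap U^-=\{1\}$) must be interpreted inside $\G$ rather than $G$. This is handled cleanly by the canonical unipotent splitting together with our fixed splitting of $K$, which furnish compatible splittings on every subgroup of $K$ and of $U^\pm$ appearing in the argument.
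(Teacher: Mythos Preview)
Your proof is correct and follows essentially the same route as the paper: Bruhat decomposition in the rank-one Levi $M_s$ for the first claim, then the Iwahori factorisation of $K_1$ together with $\P\cap U^-=\{1\}$ for the second. One trivial slip: to get $\P K_1=\P(K_1\cap U^-)$ you want the Iwahori factorisation in the order $K_1=(K_1\cap U)(K_1\cap\M)(K_1\cap U^-)$ so that the first two factors (not the last two) absorb into $\P$ on the left; this is of course equivalent and does not affect the argument.
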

\begin{proof}
 The first claim is an immediate consequence of the Bruhat decomposition in the rank one group $M_s$. For the latter claim, write $u=n'u'$. Then $s^{-1}n u w_0\in \P w_0 K_1$ if and only if
$p(n)u' \in \P K_1$ and this set is equal to $\P (U^-\cap K_1)$ since $K_1$ admits an Iwasawa decomposition. This finishes the proof.
\end{proof}

%$$
%\begin{pmatrix}
%0 & -1 \\
%1 & 0
%\end{pmatrix}
%\begin{pmatrix}
%1 & x \\
%0 & 1
%\end{pmatrix}=
%\begin{pmatrix}
%1/x & -1 \\
%0 & x
%\end{pmatrix}
%\begin{pmatrix}
%1 & 0 \\
%1/x & 1
%\end{pmatrix}
%$$
%So we see that for any $n\in U_s$, there exists $n'\in U_s^-$ and $p(n)\in \B_s$ with $s^{-1}n=p(n)n'$. In the $SL_2$ case, this same identity holds with the section ${\bf s}$ applied to all matrices, as seen with the Kubota cocycle. For $SU_3$ case, the analagous statement appears false, and more work is needed.

As an immediate consequence of this lemma, we can simplify (\ref{cor9.2}) to leave ourselves with an expression for the coefficient $\tau_{a,b}^{(s,\chi)}$ with only one integral, namely
\begin{equation}\label{taurankone}
\tau_{a,b}^{(s,\chi)}= c_s(\chi)^{-1} \int_{U_s}\la_a^{(s\chi)} f_b( p(n) w_0 )  \psi(n') dn
\end{equation}
Since the subset $\{1\}\subset U_s$ where the integrand is not defined is of measure zero, it may safely be ignored.

\begin{proposition}
 The coefficient $\tau_{a,b}^{(w,\chi)}$ is a rational function on $Y$.
\end{proposition}
\begin{proof}
Since the simple reflections generate $W$, it suffices to consider the case where $w=s$ is a simple reflection. Now we may use (\ref{taurankone}) and our strategy is to argue in an analogous manner to the proof of Theorem \ref{gk}.

We use the same decomposition
$$
U_s = (U_s\cap K) \bigcup \left( \bigcup_{r>0} C_r\right).
$$
of the domain of integration in (\ref{taurankone}) induced by the valuation of root datum.

If $n\in C_r$ with $r>0$, then $n'\in K$ and hence $\psi^{-1}(n')=1$. Furthermore $p(\p^{-n\a}n\p^{n\a})=\p^{2n\a}p(n)\pmod{U}$. Thus in passing from $C_r$ to $C_{r+2n}$, the integrand in (\ref{taurankone}) %$\la_a^{(s\chi)} f_b( p(n) w_0 )  \psi^{-1}(n')$ 
is multiplied by $(\delta^{1/2}\chi)(\p^{2n\a})$ while the volume of $C_{r+2n}$ is $\delta^{-1}(\p^{n\a})$ times that of $C_r$.

Therefore the integral over $C_{r+2n}$ is equal to $\chi(\p^{2n\a})$ times the integral over $C_r$.
As in the proof of Theorem \ref{gk}, computing the integral (\ref{taurankone}) reduces to adding an integral over a compact piece times a geometric series. The compact piece is dealt with in exactly the same manner as before, completing the proof.
\end{proof}

%%%%%%%%%%%%%%%%%%%%%%%%%%%%%%%%%%%%%%%%%%%%%%%%%%%%%%%%%%
\section{Digression on $SU_3$}\label{digression}%%%%%%%%%%
%%%%%%%%%%%%%%%%%%%%%%%%%%%%%%%%%%%%%%%%%%%%%%%%%%%%%%%%%%%
From now until the end of the paper we will assume that $G$ is an unramified group. 
For any simple reflection $s$, let $G_s$ be the simply connected cover of the derived group of $M_s$. The group $G_s$ is a simply-connected, semisimple unramified group of rank one, and such groups are completely classified. There are two possibilities, either $G_s$ is isomorphic to $\Res_{E/F}SL_2$ for an unramified extension $E$ of $F$, or is isomorphic to $\Res_{E/F}SU_3$, where the special unitary group $SU_3$ over $E$ is defined in terms of an unramified quadratic extension $L$ of $E$, which again is unramified over $F$.

Of these two possibilities, the group $SL_2$ will be familiar to most readers. We pause to collate some facts about the less well-known $SU_3$ that will prove to be of use later on.

We use $\conj z$ to denote the image of $z$ under the non-trivial element of $\Gal(L/E)$. 
The special unitary group $SU_3(E)$ is defined to be the subgroup of $SL_3(L)$ preserving the Hermitian form $x_1\conj{y_3}+x_2\conj{y_2}+x_3\conj{y_1}$.
Explicitly, if $J$ is the matrix with ones on the off-diagonal and zeroes elsewhere, then $X\in SL_3(L)$ is in $SU_3(E)$ if and only if $^t\conj{X}JX=J$.

These coordinates are chosen such that the intersection of $SU_3(E)$ with the set of upper-triangular matrices constitutes a Borel subgroup. Its unipotent radical consists of all matrices of the form
\begin{equation}\label{su3udefn} u=\begin{pmatrix}
1  & x & y  \\
0 & 1 & -\conj{x}\\
0 & 0 & 1
\end{pmatrix}
\end{equation} where $x$ and $y$ are elements of $L$ with $x\conj x +y+\conj y=0$.

We take our maximal compact subgroup $K$ to be the subgroup consisting of all matrices in $SU_3(E)$ with entries in $O_L$.

Let $v$ denote the valuation on $E$. For any $r\in \R$, the set of $u\in U$ with $v(y)\geq r$ forms a subgroup of $U$. (This is the filtration induced by a valuation of root datum in Bruhat-Tits theory introduced in the proof of Theorem \ref{gk}). Let us denote this subgroup by $U_r$.

%A particular aspect that will require some care, is that the fibres of the map $u\mapsto \p^{-2m} y$ from $U_{2m}\setminus U_{2m-1}$ to $O_L^\times$ do not all have the same volume. Namely the volume of a fibre over a point $z$ with $z+\conj z \in O_L^\times$ is $q+1$ times the volume of the fibre over a point $z$ with $z+\conj z\in \p O_L$.

The following equation is fundamental, and explicitly realises the first part of Lemma \ref{fundamental} in $SU_3(E)$.
\begin{equation}\label{su3fundamental}
\begin{pmatrix}
0 & 0 & 1 \\
0 & -1 & 0\\
1 & 0 & 0
\end{pmatrix}\begin{pmatrix}
1  & x & y  \\
0 & 1 & -\conj{x}\\
0 & 0 & 1
\end{pmatrix}=\begin{pmatrix}
1/\conj{y} & x/y & 1 \\
0 & \conj{y}/y & \conj{x}\\
 0 & 0 & y
\end{pmatrix}\begin{pmatrix}
1 &0 & 0 \\
\conj{x}/\conj{y} & 1 & 0\\
\conj{y}^{-1} & -x/y &  1
\end{pmatrix}^{-1}
\end{equation}
In the next section we will see how to lift this to an equation in the metaplectic cover.

%Let us say that an element $u\in U$ is of type I if $v(y)=2v(x)$ and is of type II otherwise.

Let $\a$ be the positive generator of $X_*(S)$. We can, and do, write $\a=\a_1+\a_2$ where $\a_1$ and $\a_2$ are simple coroots of $SL_3$.

%%%%%%%%%%%%%%%%%%%%%%%%%%%%%%%%%%%%%%%%%%%%%%%
\section{The descent process}%%%%%%%%%%%%%%%%%%
%%%%%%%%%%%%%%%%%%%%%%%%%%%%%%%%%%%%%%%%%%%%%%%

Since we are only working with unramified groups, Galois descent for an unramified extension of local fields shall be the only descent process that shall concern us. %We will now give a discription of the construction of the metaplectic group $\G$ in a manner that clarifies the relationship with such Galois descent. This overview is described more precisely in \cite[\S 12.11]{bd}.

%First one constructs the central extension of $G(F)$ by $K_2(F)$. Then one pushes forward by the tame symbol $K_2(F)\rightarrow k^\times$ to arrive an an extension of $G$ by $k^\times$. One finally pushes forward this extension via the operation of raising to the $(q-1)/n$-th power to obtain the metaplectic group $\G$.

%Now suppose that $E$ is a degree $d$ unramified extension of $F$. This description of $\G$ is particularly amenable to descent and shows
%that $\G$ can be realised via descent as a subgroup of a group $\widetilde{G(E)}$ which is a central extension of $G(E)$ by the group of $n\frac{q^d-1}{q-1}$-th roots of unity. In fact this description has already been implicitly used in the discussion following Assumption \ref{assumption1} to justify the use of this assumption in the unramified case.

We will first consider how descent behaves under restriction of scalars for semisimple groups.
Let $E$ be an unramified field extension of $F$, ${\bf G'}$ be a reductive group over $F$ and ${\bf G}=\Res_{E/F}{\bf G'}$.
 %The answer we will get is not surprising. Namely, if $G=\Res_{E/F}G'$, then the metaplectic group $\G$ is isomorphic to the central extension of $G(F)=G'(E)$ by $\mu_n$ obtained by considering $G'$ as an algebraic group over $E$.

 Let $\Gamma$ be the Galois group $\Gal(E/F)$. Let $T'$ be a maximal torus of ${\bf G}'$ and $T=\Res_{E/F}(T')$, which is a maximal torus of ${\bf G}$. As Galois modules we have $X_*(T)=X_*(T')\otimes \Z[\Gamma]$.

Consider $T'(E)=T(F)\hookrightarrow T(E)$. At the level of cocharacter lattices this corresponds to the diagonal embedding $X_*(T')\hookrightarrow X_*(T')\otimes\Z [\Gamma]=X_*(T)$, namely $y\mapsto \sum_{\gamma\in \Gamma} y\otimes \ga$. Let us write $Q'$ for the restriction of $Q$ to the image of $X_*(T')$ in $X_*(T)$.

The precise claim about the behaviour of the central extension under descent in this situation is the following:

\begin{proposition}\cite[Proposition 12.9]{bd}
The central extension of ${\bf G}(F)$ by $\mu_n$ associated with the quadratic form $Q$ and the central extension of ${\bf G'}(E)$ by $\mu_n$ associated with the quadratic form $Q'$ are isomorphic.
\end{proposition}

The other descent calculation we need to study in detail is descent from $SL_3$ to $SU_3$, which will enable us to perform computations in the metaplectic cover of $SU_3$. Our strategy is to realise  $\widetilde{SU_3(E)}$ as a subgroup of the $n(q+1)$-fold cover of $SL_3(L)$, with the same quadratic form characterising the extension in each case. There is an explicit cocycle for the cover of $SL_3(L)$ given to us by Banks, Levi and Sepanski. Their result \cite[Theorem 7]{bls}, together with the equations appearing in its proof provide an algorithmic method to multiply in $\widetilde{SL_3(L)}$.
It provides us with a section $\sss$ and a 2-cocycle $\sigma$ for which multiplication is given by $\sss(g_1g_2)=s(g_1)s(g_2)\sigma(g_1,g_2)$. We caution the reader than upon restriction of $\sss$ to $SU_3(E)$, the image does not lie in the $n$-fold cover $\widetilde{SU_3(E)}$.

Our strategy for circumventing this problem to find explicit elements of $\widetilde{SU_3(E)}$ is to use Theorem \ref{unipotentsplitting} which states that all unipotent subgroups are uniquely split in central extensions. We will use this fact to lift the identity (\ref{su3fundamental}) into an identity in the metaplectic cover.

By construction the section $\sss$ canonically splits the group $U$ of upper-triangular unipotent matrices in $SL_3$. Hence, the splitting of the lower-triangular unipotent subgroup $U^-$ must be given by $u\mapsto \sss(w_0)\sss(w_0uw_0)\sss(w_0)$, where $w_0=\left(\begin{smallmatrix}
0 & 0 & 1 \\ 
0 & -1 & 0 \\
1&  0 & 0
\end{smallmatrix}\right)$.

Let $$n_1=\begin{pmatrix}
1  & x & y  \\
0 & 1 & -\conj{x}\\
0 & 0 & 1
\end{pmatrix}\quad\text{and}\quad n_2=\begin{pmatrix}
1 & -x/y & \conj{y}^{-1} \\
0 & 1 & \conj{x}/\conj{y} \\
0 & 0 &  1
\end{pmatrix}.$$
Using the results of Banks Levi and Sepanski referenced above, it may be computed that $\sigma(n_1w_0n_2,w_0)=(x,\conj{y}/y)^{Q(\a)}$ and $\sigma(w_0,p(n))=(y,\conj y)^{Q(\a)}$. Thus we have $$\sss(w_0)\sss(n_1)\sss(w_0)\sss(n_2)\sss(w_0)=(x,y/\conj{y})^{Q(\a)}(y,\conj{y})^{Q\a)}\sss(p(n)).$$

There is a subtlety that needs to be taken care of. According to our construction in Section \ref{metaplectic}, the choice of representative for the simple reflection $s$ is not the same as the element $w_0$ we have been using so far in this computation.
Instead, it differs by a factor of $\theta^\a$ where $\theta\in L$ is such that $|\theta|=1$ and $\theta+\conj{\theta}=0$.

 Now let us write $y=a\theta\p^m$. Then after one more (simpler) cocycle computation, we find that
\begin{equation}\label{su3fundm}
 s^{-1}n n' =  (x,y/\conj y)^{Q(\a)}(y,\conj y)^{Q(\a)}(a,\p)^{-mQ(\a)}a^\a \p^{m\a} \pmod U.
\end{equation}

%Let $b=\p^\mu$. The commutator formula gives $[v,\p^\mu]=(\p,v)^{B(\a_1,\mu)}(\p,\conj{v})^{B(\a_2,\mu)}$. Since $B$ is Galois-invariant and $\mu$ is Galois-fixed, this commutator becomes $(\p,v\conj{v})^{B(\a,\mu)/2}$.

%Now there are two .

%Case 1 is where $v(y)=2v(x)$. Then we get $(\p,v\conj{v})^{kQ(\a)+B(\mu,\a)/2}$ where $k=m/2$. Case two is where $v(y)\neq 2v(x)$. Then we get $(\p,a)^{mQ(\a)+B(\mu,\a)}$ where $a\in O_F^\times$. This gives the desired roots of unity to allow us to complete the $SU_3$ calculation.

%Note that $Q(\a)$ divides$B(\a,\mu)/2$.

%%%%%%%%%%%%%%%%%%%%%%%%%%%%%%%%%%%%%%%%%%%%%%%%%%%%%%%%%%%%%%%%%%%%%%%%%%
\section{Gindikin-Karpelevic formula}\label{gkformula}%%%%%%%%%%%%%%%%%%%%
%%%%%%%%%%%%%%%%%%%%%%%%%%%%%%%%%%%%%%%%%%%%%%%%%%%%%%%%%%%%%%%%%%%%%%%%%%

Recall that for a simple reflection $s$, $G_s$ is defined to be the simply connected cover of the derived group of the corresponding Levi subgroup $M_s$. Let $q$ denote the cardinality of the residue field of $E$, where $E$ is as in the classification of $G_s$. For a simple coroot $\a$, let $x_\a^{n_\a}=\chi(\p^{n_\a\a})$, $n_\a=n/\gcd(n,Q(\a))$ and $\epsilon=(-1)^{n_\a}$. We have the following refinement of Theorem \ref{gk}.
Since $c_{w_1w_2}(\chi)=c_{w_1}(w_2\chi)c_{w_2}(\chi)$ whenever $\ell(w_1w_2)=\ell(w_1)+\ell(w_2)$, this is enough to evaluate $c_w(\chi)$ for all Weyl group elements $w$.

\begin{theorem}\label{gkunramified}
Suppose that $G$ is unramified, and the simple reflection $s$ corresponds to the simple coroot $\a$. Then
$$
c_s(\chi)=\begin{cases}
 \displaystyle\frac{1-q^{-1}x_\a^{n_\a}}{1-x_{\a}^{n_\a}} & \text{if\ }\  G_s\cong SL_2(E) \\
\displaystyle\frac{( 1+\epsilon q^{-1}x_\a^{n_\a} )( 1-\epsilon q^{-2}x_\a^{n_\a} )}{1-x_\a^{2n_\a}} & \text{if\ } \ G_s\cong SU_3(E) \\
% \frac{( 1+q^{-1}x_\a^{n_\a} )( 1-q^{-2}x_\a^{n_\a} )}{1-x_\a^{2n_\a}} & \text{if\ } \ G_s\cong SU_3(E) \text{  and  } n_\a \text{ is odd} \\
% \frac{( 1-q^{-1}x_\a^{n_\a} )( 1+q^{-2}x_\a^{n_\a} )}{1-x_\a^{2n_\a}} & \text{if\ } \ G_s\cong SU_3(E) \text{  and  } n_\a \text{ is even}
 \end{cases}
$$
\end{theorem}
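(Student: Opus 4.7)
The plan is to carry out explicitly the integral set up in the proof of Theorem \ref{gk}. By Proposition \ref{fubini} it suffices to treat a simple reflection $s$, and the intertwining integral then takes place entirely inside $M_s$. By the classification recalled at the start of Section \ref{digression}, the rank-one group $G_s$ is either $\Res_{E/F}SL_2$ or $\Res_{E/F}SU_3$, and the restriction-of-scalars descent analysis of the preceding section reduces the computation in each case to the corresponding group over $E$ with the induced Brylinski--Deligne cover. The starting formula, extracted from the proof of Theorem \ref{gk}, is
\[
c_s(\chi)\, v_0 \;=\; v_0 \;+\; \sum_{\substack{r>0\\ n_\a\mid r}}\int_{C_r}\phi_K(s^{-1}u)\,du,
\]
with only the terms satisfying $n_\a\mid r$ surviving because $\p^{r\a}\in H$ is then required for the integrand to lie in $i(\chi)^{\M\cap K}$.

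For $G_s\cong \Res_{E/F}SL_2$, I would parametrise $U_s$ by $E$ via the upper-triangular unipotent embedding. The stratum $C_r=\{x\in E : v(x)=-r\}$ has volume $q^r(1-q^{-1})$, and the rank-one Bruhat decomposition exhibits the torus part of $s^{-1}u$ as $\p^{r\a}$ times a unit, so the integrand equals $q^{-r}x_\a^r v_0$ whenever $n_\a\mid r$. Each such $C_r$ therefore contributes $(1-q^{-1})x_\a^r v_0$, and the resulting geometric series in $r=n_\a,2n_\a,\ldots$ simplifies to
\[
c_s(\chi) \;=\; 1 + (1-q^{-1})\frac{x_\a^{n_\a}}{1 - x_\a^{n_\a}} \;=\; \frac{1 - q^{-1}x_\a^{n_\a}}{1 - x_\a^{n_\a}},
\]
as claimed.

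For $G_s\cong \Res_{E/F}SU_3$ the group $U_s$ is non-abelian, parametrised by $(x,y)\in L\times L$ subject to $x\conj{x}+y+\conj{y}=0$, and the role of the rank-one Bruhat decomposition is played by the explicit identity (\ref{su3fundm}) derived by descent from $SL_3(L)$ in the preceding section. Writing $y=v\theta\p^m$ with $v\in O_L^\times$, the torus part of $s^{-1}nn'$ is $v^\a\p^{m\a}$ up to the Hilbert-symbol scalar $(x,y/\conj{y})(y,\conj{y})(v,\p)^{-m}\in\mu_n$. I would stratify the integration over $U_s\setminus(U_s\cap K)$ by $m=v(y)>0$ and, within each stratum, by type~I ($v(y)=2v(x)$) versus type~II ($v(y)<2v(x)$), the two loci distinguished by the fibre-volume dichotomy recorded in Section \ref{digression}. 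After averaging over the unit parameter $v$, the condition $n_\a\mid m$ must hold for a non-zero contribution; the three Hilbert symbols collapse to a sign $\epsilon^m$ with $\epsilon=(-1)^{n_\a}$; and the two types produce two geometric series in $x_\a^{n_\a}$ whose leading coefficients differ by the factor $q+1$. Combining them over the common denominator $1-x_\a^{2n_\a}$ produces the advertised numerator $(1+\epsilon q^{-1}x_\a^{n_\a})(1-\epsilon q^{-2}x_\a^{n_\a})$. The principal obstacle is precisely this $SU_3$ bookkeeping: one must correctly handle the unequal fibre volumes on the two strata, extract the sign $\epsilon^m$ cleanly from the product of three Hilbert symbols after integrating over $v$, and verify that the two resulting progressions add up to the claimed product-form numerator.
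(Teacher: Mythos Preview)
Your overall plan---explicitly evaluating the shell integral from the proof of Theorem \ref{gk}---is exactly what the paper does, and your $SL_2$ computation is correct (the paper simply cites \cite{decom} for that case).

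For $SU_3$, the strategy is right but your description of the sign mechanism is wrong, and this is the heart of the computation. The Hilbert symbols do \emph{not} collapse to $\epsilon^m$ after integrating over $v$. What actually happens is that, after using sphericality of $\phi_K$, the integrand on $C_m$ carries a factor $(v\conj v,\p)^{m/2}$ (with the appropriate $Q(\a)$ normalisation); integrating this character over units forces the exponent to be divisible by $n$, hence $n_\a\mid m$, but contributes no sign on the surviving shells. The sign $\epsilon=(-1)^{n_\a}$ enters only through a case split on the parity of $n_\a$. For $n_\a$ odd, both even and odd $m$ (with $n_\a\mid m$) contribute: the even-$m$ shells give $(1-q^{-3})x_\a^{2n_\a}/(1-x_\a^{2n_\a})$ and the odd-$m$ shells give $+q^{-2}(q-1)x_\a^{n_\a}/(1-x_\a^{2n_\a})$. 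For $n_\a$ even, all contributing $m=2k$ are even; the shells with $n_\a\mid k$ again give the $(1-q^{-3})$ term, while those with $n_\a\nmid k$ but $n_\a\mid 2k$ give $-q^{-2}(q-1)x_\a^{n_\a}/(1-x_\a^{2n_\a})$. The opposite signs on the middle term are what package uniformly as $\epsilon$.

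Relatedly, the Type I/II split is not used uniformly on every shell as you propose. It is needed only in the $n_\a$-even sub-case where $n_\a\nmid k$ but $n_\a\mid 2k$: there the residual Hilbert-symbol character on $v\conj v$ is genuinely nontrivial (quadratic), and evaluating its integral requires separating Type I from Type II and invoking the $(q+1)$ fibre-volume discrepancy from Section \ref{digression}---this is precisely where the minus sign comes from. On all other surviving shells the character is trivial and a plain volume count suffices. So the two geometric series are indexed not by Type I versus Type II, but by the two residue classes of $m$ modulo $2n_\a$.
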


\begin{proof}
The author has already written a proof in the case where $G_s\cong SL_2(E)$ \cite[Theorem 6.4]{decom}, so we will not repeat the argument here. When $G_s\cong SU_3(E)$, we present the
proof as a warm up for the more challenging computation of $\tau_{a,b}^{(s,\chi)}$ that will appear in Section \ref{su3rankone}. We follow the strategy from the proof of Theorem \ref{gk}. Hence we have to evaluate the integral in the formula

\begin{equation}\label{cschi}
c_s(\chi)v_0=\int_{U_s} \phi_K(s^{-1}u) du.
\end{equation}

The integral over $U_s\cap K$ is trivially equal to $v_0$. We focus our attention on the remainder of the integral.

As in (\ref{su3udefn}), we will write \begin{equation} u=\begin{pmatrix}
1  & x & y  \\
0 & 1 & -\conj{x}\\
0 & 0 & 1
\end{pmatrix}
\end{equation} where $x$ and $y$ are elements of $L$ with $x\conj x +y+\conj y=0$.
Note that this last equation implies that $y\in O_L$ if and only if $u\in K$.

Let $-v(y)=m$. First let us suppose that $2v(x)=v(y)$. Then $m$ is even, let $k=m/2$. Recall $y=a\theta\p^{-m}$. Then using the formula
(\ref{hilbert}) for an unramified Hilbert symbol, the product $(x,y/\conj{y})(y,\conj{y})(a,\p)^{-m}$ is equal to
\[
 \left( \frac{\conj{a}}{a} \right)^k a^m \conj{a}^{-m} a^{-m} = (a\conj{a})^{-k}
\] where by abuse of notation take the reductions modulo $\p$ of all $a$'s and $\conj{a}$'s in the above equation, and include $\F_{q^2}^\times$ into $\C^\times$.

If instead $2v(x)\neq v(y)$, then $y/\conj{y}\equiv -1 \pmod{\p}$, so under these circumstances, $(x,y/\conj{y})=1$, using $q\equiv 1\pmod{2n}$. Thus the root of unity $(x,y/\conj{y})(y,\conj{y})(a,\p)^{-m}$ is equal to $(\p,a)^m$.

Therefore, for all $u\in C_m$, we have $\phi_K(s^{-1}u)=(a\conj a,\p)^{mQ(\a)/2} \phi_K(\p^{m\a})$. So (\ref{cschi}) becomes
\[
c_s(\chi)=v_0 +\sum_{m=1}^\infty \int_{C_m} (a\conj a,\p)^{mQ(\a)/2} \phi_K (\p^{m\a}). 
\]

From this equation it is evident that the expression does not depend on the ambient group $G$, only on $G_s$. If we choose to take $G=SU_5$, then there is a coroot $\beta$ such that $B(\a,\beta)=Q(\a)$. By (\ref{commutatorformula}), the only powers of $\p^\a$ which lie in $H$ are powers of $\p^{n_\a \a}$. So all terms in the above sum where $m$ is not divisible by $n_\a$ must be zero.

%oh, crap if there are spare roots of unity floating around here they will start to surface. or is it that they're all hilbert symbols so will vanish. but still, need more care, even though they all vanish.

First consider the case when $n_\a$ is odd. Then $n_\a$ divides $2k$ if and only if it divides $k$. For such $m=2k$, we have $(a\conj a,\p)^{mQ(\a)/2}=(a\conj a,\p)^{kQ(\a)}$ and this exponent is divisible by $n$, hence the root of unity is 1. Therefore the contribution to (\ref{cschi}) from the terms where $m$ is even is
\begin{align*}
\sum_{l=1}^\infty \operatorname{vol}(C_{2ln_\a}) (q^{-2} x_\a)^{2ln_\a} v_0&=
\sum_{l=1}^\infty q^{4ln_\a}(1-q^{-3}) (q^{-2} x_\a)^{2ln_\a}v_0 \\ &= (1-q^{-3}) \frac{ x_\a^{2n_\a} }{ 1-x_\a^{2n_\a} }v_0.\end{align*}
When $m$ is odd and divisible by $n_\a$, write $m=(2l+1)n_\a$. Necessarily $v(y)\neq 2v(x)$ so the root of unity is simply $(\p,a)^{mQ(\a)}$, again the exponent is divisible by $n$ and so the root of unity is 1. So the contribution to (\ref{cschi}) from the terms where $m$ is odd is
\begin{align*}
\sum_{l=0}^\infty \operatorname{vol}(C_{(2l+1)n_\a}(q^{-2}x_\a)^{(2l+1)n_\a}v_0&=
\sum_{l=0}^\infty (q-1) q^{(2l+1)n_\a-2} (q^{-2}x_\a)^{(2l+1)n_\a}v_0 \\
&=q^{-2}(q-1) \frac{x_\a^{n_\a}}{1-x_\a^{2n_\a}}v_0.
\end{align*}

This completes the proof when $n_\a$ is odd.
Now let us assume that $n_\a$ is even. 

As before, we only need to consider the integral over $C_m$ where $m$ is divisible by $n_\a$. This implies that $m$ is even. Let $m=2k$.

If $n_\a$ divides $k$ then $(a\conj a,\p)^{kQ(\a)}=1$. The contribution to (\ref{cschi}) from such terms is again
\[
 (1-q^{-3}) \frac{ x_\a^{2n_\a} }{ 1-x_\a^{2n_\a} }v_0.\]

Now suppose that $m=2k$ is such that $n_\a$ divides $m$ but does not divide $2k$.

The fibres of the map from $C_m$ to $\mathbb{F}_{q^2}^\times$ given by $u\mapsto a\pmod{\p}$ do not all have the same size. In particular the volume of the fibre over a point not in $\F_q$ is $q+1$ times the volume of the fibre over a point in $\F_q$. 

Note that for $a\in\F_q^\times$, we have $(a\conj a,\p)^{kQ(\a)}=1$ while on the larger group $\F_{q^2}^\times$, the homomorphism $a\mapsto (a\conj a,\p)^{kQ(\a)}$ is nontrivial.

We compute
\begin{align*}
\int_{C_m} (a\conj a,\p)^{kQ(\a)} &= q^{2m-3}\left((q+1) \sum_{a\in \F_{q^2}^\times} (a\conj a,\p)^{kQ(\a)} -q \sum_{a\in \F_q^\times} (a\conj a,\p)^{kQ(\a)}\right) \\
&=-q^{2m-2}(q-1).
\end{align*}
 
Therefore the contribution to (\ref{cschi}) from such terms is (setting $m=(2l+1)n_\a$)
\[
\sum_{l=0}^\infty -q^{(4l+2)n_\a-2}(q-1) (q^{-2}x_\a)^{(2l+1)n_\a)} = \frac{-q^{-2}(q-1) x_\a^{n_\a} }{1-x_\a^{2n_\a}}.
\]
This covers all possible cases, and a simple addition of the rational functions we have obtained completes the proof.
\end{proof}

%%%%%%%%%%%%%%%%%%%%%%%%%%%%%%%%%%%%%%%%%%%%%%%%%%%%%%%%%%%%%%
\section{The action when $G_s\cong \Res SL_2$}%%%%%%%%%%%%%%%%
%%%%%%%%%%%%%%%%%%%%%%%%%%%%%%%%%%%%%%%%%%%%%%%%%%%%%%%%%%%%%%%

Let $s$ be a simple reflection in $W$.
Suppose $G_s\cong SL_2(E)$, $q$ is the cardinality of the residue field of $E$ and $\a$ is the unique positive coroot. The following theorem is equivalent to \cite[Lemma I.3.3]{kp}. We give a different and more direct proof, which will serve as a template for the more involved computation in the following section. Given any integer $t$,
define the Gauss sum $\mathfrak{g}_{SL_2(E)}(t)$ by
\[\mathfrak{g}_{SL_2(E)}(t)=
 \int_{O_E^\times} (v,\p)^t\psi\left(\frac{v}{\p}\right) dv
\] with a choice of Haar measure such that the total volume of $O_E^\times$ is $q-1$.

Recall the coefficients $\tau_{a,b}^{(s,\chi)}$ defined in (\ref{taudefn}). For two elements $\la,\mu\in X_*(S)$, we write $\la\sim\mu$ if $\la-\mu\in\La$.

\begin{theorem}\label{sl2tau}
Suppose that $a=\p^\nu$ and $b=\p^\mu$. Then we can write $\tau_{a,b}^{(s,\chi)}=\tau^1_{a,b}+\tau^2_{a,b}$ where
\begin{eqnarray*}
\tau^1_{a,b}&=&0\quad {\rm unless}\quad \nu \sim \mu \\%- \frac{B(\a,\mu)}{ Q(\a)}\a \\
\tau^2_{a,b}&=&0\quad {\rm unless}\quad s\nu \sim \mu-\a
\end{eqnarray*}
If $\nu=\mu$, then \[
\tau^1_{a,b}= (1-q^{-1})\frac{x_\a^{ n_\a \lceil \frac{B(\a,\mu)}{n_\a Q(\a)} \rceil }}{1-q^{-1}x_\a^{n_\a}}. \]
If $\nu=s\mu+\a$, then \[
\tau^2_{a,b}= q^{-1}\mathfrak{g}_{SL_2(E)}(B(\a,\mu)-Q(\a))\frac{1-x_\a^{n_\a}}{1-q^{-1}x_\a^{n_\a}}.\]
\end{theorem}
\begin{proof}
Write $n=(\begin{smallmatrix}
  1 & x \\
  0 & 1
\end{smallmatrix})\in G_s$ and $x=\p^{-m}v^{-1}$ where $v\in O_E^\times$ and $m\in \Z$.
The analogous statement to (\ref{su3fundm}) is $p(n)=(v,\p)^{mQ(\a)}v^\a \p^{m\a}u'$ for some $u'\in U$. We now calculate
\begin{align}
f_b(p(n)w_0)&=(v,\p)^{mQ(\a)} \delta^{1/2}(\p^{m\a}) \pi_\chi (v^\a \p^{m\a} )  f_b(w_0)\nonumber \\
&= q^{-m}(v,\p)^{mQ(\a)} \pi_\chi (b v^\a \p^{ma} )v_0\nonumber \\
&= q^{-m}(v,\p)^{mQ(\a)} [b,v^\a] \pi_\chi ( v^\a b \p^{m\a} )v_0\nonumber \\
&= q^{-m}(v,\p)^{mQ(\a)+B(\a,\mu)} \pi_\chi ( \p^{\mu+m\a} )v_0\label{fb}.
\end{align}
In the second line we use $f_b(w_0)=\pi_\chi(b)v_0$ and the $b$ ends up on the left of $v^\a \p^{m\a}$ since the $\pi_\chi$-action on functions in $I(\chi)$ is by $\pi_\chi(x)f(y)=f(yx)$.
In the last line we have used the commutator relation and the fact that $\phi_K(v^\a)=\phi_K(1)=v_0$ as $\phi_K$ is spherical.

Recall that we are trying to evaluate
$$
\tau_{a,b}^{(s,\chi)}= c_s(\chi)^{-1} \int_{U_s}\la_a^{(s\chi)} f_b( p(n) w_0 )  \psi^{-1}(n') dn
$$

We decompose $U_s$ into shells where $|x|$ is constant on each shell, and the above calculations show that
$$
\tau_{a,b}^{(s,\chi)}= c_s(\chi)^{-1}q^{-1} \la_a^{(s\chi)} \sum_{m\in \Z}
\pi_\chi ( \p^{\mu+m\a} ) \phi_K(1)
 \int_{O_E^\times} (v,\p)^{mQ(\a)+B(\a,\mu)} \psi(-\p^m v) dv.
$$ where a normalisation of Haar measure on $O_E^\times $ is chosen such that the group has volume $q-1$.

If $m\leq -2$, then the corresponding integral over $O_E^\times$ vanishes. Let us now consider contribution from the summand with $m=-1$. Here, the presence of the term $\la_a^{(s\chi)}\pi_\chi(\p^{\mu+m\a})v_0$
implies that this contribution is non-zero only when $s\nu\sim \mu-\a$. When $s\mu=\nu+\a$, the $m=-1$ term in the sum gives a contribution of $c_s(\chi)^{-1}q^{-1}\mathfrak{g}_{SL_2(E)}(B(\a,\mu)-Q(\a))$ to $\tau_{a,b}^{(s,\chi)}$. This corresponds to the $\tau_{a,b}^2$ part of the proposition.

Now we turn to the terms where $m\geq 0$. Here, the argument of $\psi$ is in $O_E$, so the character $\psi$ automatically takes the value 1. Hence, by Lemma \ref{integratecharacter}, the integral over $O_E^\times$ vanishes unless the integrand is identically equal to 1. This 
 occurs if and only if $B(\a,\mu)+mQ(\a)\equiv 0\pmod n$.

It was shown in the proof of \cite[Theorem 11.1]{mcn} that $B(\a,\mu)$ is divisible by $Q(\a)$. 
Thus the condition for non-vanishing of the integral now becomes $m\equiv -B(\a,\mu)/Q(\a) \pmod {n_\a}$. Let us write $m=kn_\a -B(\a,\mu)/Q(\a)$. Then $k$ runs over all integers greater than or equal to $\lceil \frac{B(\a,\mu)}{n_\a Q(\a)} \rceil$. By Lemma \ref{inh}, for integers $m$ in this family, all elements of the form $\p^{\mu+m\a}$ lie in the same $H$-coset. 

Therefore the sum over $m\geq 0$ is zero
unless $s\nu\sim \mu- \frac{B(\a,\mu)}{ Q(\a)}\a$. By the definition of the action of $W$ on the cocharacter lattice, this is equivalent to $\nu\sim \mu$.

If indeed we do have $\nu= \mu$, then we obtain a contribution to $\tau_{a,b}^{(s,\chi)}$ from the terms $m\geq 0$ equal to 
$$c_s(\chi)^{-1}(1-q^{-1})\sum_{k\geq \lceil \frac{B(\a,\mu)}{n_\a Q(\a)} \rceil} x_\a^{kn_\a}.$$ This corresponds to the $\tau_{a,b}^1$ part of the proposition and the only thing remaining to do is to sum this geometric series and substitute the evaluation of $c_s(\chi)$ from Theorem \ref{gkunramified}.
\end{proof}
\section{The action when $G_s\cong \Res SU_3$}\label{su3rankone}
%%%%%%%%%%%%%%%%%%%%%%%%%%%%%%%%%%%%%%%%%%%%%%%%%%%%%%%%%%%%%%

Now that we have warmed up by proving a Gindikin-Karpelevic formula and covered the computation of the coefficients $\tau_{a,b}^{(s,\chi)}$ when $G_s \cong \Res SL_2$, we present the analogue of Theorem \ref{sl2tau} when $G_s\cong \Res SU_3$.

So assume that $G_s\cong \Res_{E/F}SU_3$, let $q$ be the cardinality of the residue field of $E$ and let $\a$ be the unique positive rational coroot. Given any integer $t$, we define the $SU_3$ Gauss sum $\mathfrak{g}_{SU_3(E)}(t)$ by
\[
 \mathfrak{g}_{SU_3(E)}(t)=\sum_{u\in U(\F_q)\setminus \{1\}}(y\conj y,\p)^t \psi(\frac{x}{\p y})
\] where $u=\begin{pmatrix}
1  & x & y  \\
0 & 1 & -\conj{x}\\
0 & 0 & 1
\end{pmatrix}\in U(\F_q)$, the unipotent radical of $SU_3(\F_q)$.
%The author does not know any properties of this particular algebraic integer analogous to the $SL_2$ case.

\begin{theorem}\label{su3tau}
 Suppose that $c=\p^\nu$ and $b=\p^\mu$. Then we can write $\tau_{c,b}^{(s,\chi)}=\tau^1_{c,b}+\tau^2_{c,b}$ where
\begin{eqnarray*}
\tau^1_{c,b}&=&0\quad {\rm unless}\quad \nu \sim \mu \\
\tau^2_{c,b}&=&0\quad {\rm unless}\quad s\nu \sim \mu-2\a
\end{eqnarray*}
If $\nu=\mu$, then \[
\tau^1_{c,b}= \frac{(1-q^{-3})x_\a^{2n_\a\lceil \frac{B(\a,\mu)}{2n_\a Q(\a)}\rceil} +(q^{-1}-q^{-2})q^{(1-\epsilon)/2} x_\a^{ (2\lceil \frac{ B(\a,\mu) + n_\a Q(\a) - Q(\a) }{ 2n_\a Q(\a) } \rceil -1)n_\a }   }{ ( 1-\epsilon q^{-1} x_\a^{n_\a} ) ( 1+\epsilon q^{-2} x_\a^{n_\a} ) }
 \]
If $\nu=s\mu+2\a$, then \[
\tau^2_{c,b}=  q^{-3} \mathfrak{g}_{SU_3(E)}(B(\a,\mu)/2-Q(\a))\frac{ 1-x_\a^{2n_\a} }{ ( 1-\epsilon q^{-1} x_\a^{n_\a} ) ( 1+\epsilon q^{-2} x_\a^{n_\a} ) }.\]
\end{theorem}

\begin{proof}
We begin by collecting notation, mostly from previous sections, which we will use in this proof.

The unramified quadratic extension of $E$ used to define $SU_3$ is denoted $L$, we use a bar to denote the action of the nontrivial element of $\Gal(L/E)$ and $\theta\in O_L^\times$ is such that $\conj\theta=-\theta$.

The coroot $\a$ can be written as $\a_1+\a_2$ where $\a_1$ and $\a_2$ are simple roots for the geometric cocharacter lattice of $SU_3$ that form an orbit under the action of $\Gal(L/E)$.

An element in $U_s$ is written
$u=\begin{pmatrix}
1  & x & y  \\
0 & 1 & -\conj{x}\\
0 & 0 & 1
\end{pmatrix}$ where $x,y\in L$ are such that $x\conj x + y + \conj y = 0$.

For nonzero $u\in U_s$, we let $m=-v(y)$, $y=a\theta\p^{-m}$ and $z=x/y$. Then $1/y=-z\conj z/2+h\theta$ for some $h\in E$.

The element $p(u)$ is equal to $ (a,\conj a)^{mQ(\a)/2}a^{\a_1}\conj{a}^{\a_2} \p^{m\a}$ times an element of $U$.
The commutator of $a^{\a_1}\conj{a}^{\a_2}$ and $\p^\mu$ is $(a\conj a,\p)^{B(\a,\mu)/2}$. Thus the evaluation of the function $f_b$ analogous to (\ref{fb}) is
$$
f_b(p(u)w_0)=q^{-2m}(a\conj a,\p)^{(mQ(\a)+B(\a,\mu))/2} \pi_\chi(\p^{\mu+m\a}) v_0.
$$

After substitutions, the equation \eqref{taurankone} becomes
\begin{equation}\label{thesum}
\tau_{c,b}^{(s,\chi)}= \sum_{m\in \Z} \frac{q^{-2m}}{c_s(\chi)} \la_c^{(s\chi)}\big(  \pi_\chi(\p^{\mu+m\a}) v_0 \big)\int_{C_m} (a\conj a
,\p)^{(mQ(\a)+B(\a,\mu))/2}  \psi (z) du.
\end{equation}

First consider the terms in the sum where $m\leq -3$ and is odd. Parametrising $C_m$ by the coordinates $(z,h)$, the integrand factors into a piece $(a\conj{a},\p)^{(mQ(\a)+B(\a,\mu))/2}$ which only depends on $h$ and a piece $\psi(z)$ which only depends on $z$. Therefore the integral over $C_m$ contains a factor equal to $\int_{\p^{(m+1)/2}\mathcal{O}_L} \psi(z)dz$. 
This is zero by Lemma \ref{integratecharacter}.

Now suppose that $m\leq 3$ and is even. Pick $u\in O_L^\times$. Consider the measure preserving homeomorphism $(z,h)\mapsto(z+\p^{-1}u,h)$ of $C_m$. The effect of this homeomorphism on 
the corresponding term of (\ref{thesum}) is to multiply the
integrand by $\psi(u/\p)$. As $u$ can be chosen such that $\psi(u/\p)\neq 1$, these terms with $m\leq 3$ are all zero.

Consider the term where $m=-2$. It is
$$
c_s(\chi)^{-1}q^{4} \la_c^{(s\chi)} \big(\pi_\chi(\p^{\mu-2\a})v_0\big) \int_{C_{-2}} (a\conj a,\p)^{B(\a,\mu)/2-Q(\a)} \psi(x/y) du.
$$ 
The presence of the factor $\la_c^{(s\chi)} \pi_\chi(\p^{\mu-2\a})v_0$ implies that this term vanishes unless $s\nu\sim \mu-2\a$. When $\nu=s\mu+2\a$, by definition of the $SU_3$ Gauss sum, we get
$$
c_s(\chi)^{-1} q^{-3} \mathfrak{g}_{SU_3(E)}(B(\a,\mu)/2-Q(\a)).
$$
This corresponds to the $\tau_{a,b}^2$ term in the theorem.

Now suppose that $m\geq -1$. Then $z\in \mathcal{O}_L$ so $\psi(z)=1$.
Therefore we have to evaluate the integral
\begin{equation}\label{cmint}
\int_{C_m} (a\conj a,\p)^{(mQ(\a)+B(\a,\mu))/2}du.
\end{equation}

First consider the case where $m$ is odd.
Then $a\equiv \conj a \pmod{\p}$. The integral (\ref{cmint}) thus reduces to a constant factor times
\[
\sum_{a\in\mathbb{F}_q^\times} (a,\p)^{mQ(\a)+B(\a,\mu)}.
\]
We evaluate it by Lemma \ref{integratecharacter} to see that when $m$ is odd, (\ref{cmint}) evaluates to
\[
\int_{C_m} (a\conj a,\p)^{(mQ(\a)+B(\a,\mu))/2}du = 
\begin{cases}
                                         \operatorname{vol}(C_m) & \text{if $n\mid mQ(\a)+B(\a,\mu)$} \\
0 & \text{otherwise} \\ \end{cases}
\]

Now suppose that $m$ is even.
We note that $B(\a,\mu)\in 2\Z$ and $B(\a,\mu)/2$ is divisible by $Q(\a)$.
This is because $\a=\a_1+\a_2$ where $\a_1$ and $\a_2$ are in the same Galois orbit. Thus by linearity of $B$ and Galois-invariantness of $\mu$, we have $B(\a,\mu)=2B(\a_1,\mu)$, and we already know that $B(\a_1,\mu)$ is divisible by $Q(\a)$.
Let us write $m=2k$ and $B(\a,\mu)=2lQ(\a)$.

By the same volume computation that appeared in Section \ref{gkformula}, we have
\[
\int_{C_m} (a\conj a,\p)^{(mQ(\a)+B(\a,\mu))/2}du = q^{2m-3}\Big((q+1)\sum_{a\in\F_{q^2}^\times} (a\conj a,\p)^{(k+l)Q(\a)} - \sum_{a\in \F_q^\times } (a,\p)^{2(k+l)Q(\a)} \Big).
\]
Again, this can be evaluated using Lemma \ref{integratecharacter} to obtain, for $m$ even,
\[
\int_{C_m} (a\conj a,\p)^{(mQ(\a)+B(\a,\mu))/2}du = 
\begin{cases}
                                         \operatorname{vol}(C_m) & \text{if $n_\a\mid k+l$} \\
-q^{2m-2}(q-1) & \text{if $n_\a\nmid k+l$ and $n_\a\mid 2(k+l)$} \\
0 & \text{if $n_\a\nmid 2(k+l)$} \\ \end{cases}
\]

In all cases, we see that we get zero unless $n$ divides $mQ(\a)+B(\a,\mu)$. This is equivalent to $m\equiv -B(\a,\mu)/Q(\a)\pmod{n_\a}$. As $n_\a\a\in\La$, under such circumstances we have
$$
\mu+m\a\sim \mu-\frac{B(\a,\mu)}{Q(\a)}\a = s\mu.
$$

The factor $\la_c^{(s\chi)}(  \pi_\chi(\p^{\mu+m\a}) v_0)$ in (\ref{thesum}) is zero unless $s\nu\sim \mu+m\a$. Combining these observations shows that we only have a contribution to (\ref{thesum}) from terms with $m\geq -1$ when $\nu\sim \mu$.

To explicitly evaluate the term $\tau_{c,b}^1$ when $\nu=\mu$ is now a routine matter of evaluating the geometric series obtained as a consequence of the integral evaluations performed above.
\end{proof}

\section{Comparison}\label{comparison}
%%%%%%%%%%%%%%%%%%%%%%%%%%%%%%%%%%%%%%%%%

We conclude this paper by comparing our above results with those of Chinta and Gunnells \cite{cg} on the construction of the local part of a Weyl group multiple Dirichlet series. To do so, we suppose that $G$ is split, simple and simply connected, and that $Q$ is chosen to take the value 1 on short coroots.

The group homomorphism from $X_*(S)$ to $\G$, $\la\mapsto\sss(\p^\la)$ provides us with a splitting of the short exact sequence (\ref{ses}), and hence determines for us an isomorphism $\mathcal{O}(Y)\cong \C[\La]$. Let $\Sigma$ be the multiplicative subset of $\C[\La]$ such that $\mathcal{O}(V) \cong \Sigma\inv \C[\La]$.

Make a choice of left coset representatives $\Ga$ for $H$ in $\M$ consisting of elements of the form $\sss(\p^\la)$. The sections $\{\la_a^{(\chi)}\}_{a\in\Ga}$ of $\mathcal{E}^*$ provide us with a trivialisation of $\mathcal{E}^*$ and hence an isomorphism
\[
\Ga(V,\mathcal{E}^*)\cong \bigoplus_{a\in\Ga} \Sigma\inv \C[\La].
\]

Define an isomorphism $\varphi\map{\Ga(V,\mathcal{E}^*)}{\Sigma\inv \C[X_*(S)]}$ by
\begin{equation*}\label{phidefn}
\varphi\left( \sum_{\sss(\p^\mu)\in\Ga}  f_\mu\cdot \la_{\sss(\p^\mu)}^{(\chi)} \right) = \sum_{\sss(\p^\mu)\in\Ga} f_\mu \cdot \mu
\end{equation*}
where $f_\mu\in \mathcal{O}(V)$.

As the definition of the $W$-action on $\mathcal{E}^*|_V$ is by transporting the action of $W$ on $\mathcal{F}|_V$, we have, for $w\in W$,
\[
w\cdot \la_a^{(\chi)} = \sum_{b\in\Ga}\tau_{a,b}^{(w,w\inv\chi)} \la_b^{(w\inv\chi)}.
\]

At this point, let us introduce a change in notation. We shall write $\tau_{\mu,\nu}^i$ for $\tau_{\sss(\p^\mu),\sss(\p^\nu)}^i$ and $i=1,2$. For $\mu\in X_*(S)$, we now write $x^\mu$ for the corresponding element of the group algebra $\C[X_*(S)]$.

%If the coset representatives $\Ga$ are changed, the coefficients $\tau_{\mu,\nu}^{(w,\chi))}$ change by
%\[
%\tau_{\mu,\nu+\la}=x^\la \tau_{\mu,\nu},\quad \tau_{\mu+\la,\nu}=x^{-\la}\tau_{\mu,\nu}.
%\]

When $w=s$ is the simple reflection corresponding to the simple coroot $\a$ and $a=\sss(\p^\mu)$, Theorem \ref{sl2tau}
yields
\begin{equation}\label{firstsaction}
s\cdot  \la_a^{(\chi)} = \tau_{\mu,\mu}^1 \la_a^{(s\chi)} + \tau_{\mu,s\mu+\a}^2\la_{s\mu+\a}{(s\chi)}.
\end{equation}

Translating (\ref{firstsaction}) under the isomorphism $\varphi$ gives the following expression for the action of a simple reflection on $\Sigma\inv \C[X_*(S)]$:
\[
s\cdot x^\mu = \tau_{\mu,\mu}^1 x^\mu + \tau_{\mu,s\mu+\a}^2 x^{s\mu+\a}.
\]
Substituting in the values of $\tau^1$ and $\tau^2$ from Theorem \ref{sl2tau} yields the following explicit formula
\begin{multline*}
s_\a\cdot x^\la=\frac{x^{s\la}}{1-q^{-1}x_\a^{n_\a}}\Big( (1-q^{-1})x_\a^{n_\a \lceil \frac{B(\a,\la)}{n_\a Q(\a)} \rceil - \frac{B(\a,\la)}{Q(\a)} }  \\ +q^{-1}x_\a^{-1} (1-x_\a^{n_\a}) \mathfrak{g}_{SL_2(F)}(B(\a,\la)-Q(\a)) \Big)
\end{multline*}

We compare this to the action introduced in \cite{cg} for the purpose of constructing Weyl Group Multiple Dirichlet Series. Our lattice $X_*(S)$ corresponds to the lattice denoted $\La$ in \cite{cg}. As in
\cite[\S 9]{chintaoffen}, we need to make a minor change of variables from the Chinta-Gunnells paper to eliminate extraneous powers of $q$. Also as per \cite[\S 9]{chintaoffen}, we need to modify the action by multiplication with a rather innocuous factor in order to directly compare this action derived from intertwining operators with that from \cite{cg}.

In \cite[Definition 3.1]{cg}, an action is defined for any dominant $\la$. We write $f\mapsto f|_\la w$ for this action. Then $f||w:=x^{\la}(x^{-\la}f|_{\la}w)$ is independent of $\la$ and defines an action of $W$ on $\Sigma^{-1}X_*(S)$. 

Let $\Phi$ denote the set of coroots, and $\Phi=\Phi^+\sqcup \Phi^-$ the decomposition into positive and negative coroots. For $w\in W$, let $\Phi(w)=\{\a\in\Phi^+|w\a\in\Phi^-\}$.
Now consider the two Weyl group actions $\circ_1$ and $\circ_2$, under which the actions of $w\in W$ on $f\in \Sigma^{-1}X_*(S)$ are
\begin{align*}
w\circ_1 f&= \frac{c_{w_0}(\chi)}{c_{w_0}(w\chi)} (w\cdot f) \\
w\circ_2 f&=  \sgn(w) \left(\prod_{\substack \a\in\Phi(w)}x_\a^{n_\a}\right) (f||w).
\end{align*}
For any $w\in W$, $f\in \Sigma\inv \C[X_*(S)]$ and  $g\in\Sigma\inv \C[\La]$,
the actions $\cdot$ and $||$ satisfy the transformation property
\[
w\cdot (gf)= (wg) w\cdot f, \qquad (gf)||w = (wg)(f||w)
\]
where $wg$ denotes the usual action of $W$ on $\Sigma\inv \C[\La]$.
Therefore
 $\circ_1$ and $\circ_2$ are indeed Weyl group actions.
There is an explicit formula for $c_{w}(\chi)$, namely
\[
c_{w}(\chi)=\prod_{\a\in\Phi(w)} \frac{1-q^{-1}x_\a^{n_\a}}{1-x_\a^{n_\a}}
\]
where the product is over all positive coroots $\a$. This formula follows from Theorem \ref{gkunramified} and the cocycle formula $c_{w_1w_2}(\chi)=c_{w_1}(w_2\chi)c_{w_2}(\chi)$ whenever $\ell(w_1w_2)=\ell(w_1)+\ell(w_2)$.

\begin{proposition}
The two Weyl group actions $\circ_1$ and $\circ_2$ are the same action.
\end{proposition}

\begin{proof}
To prove this proposition, it suffices to consider the action of a simple reflection. We have explicit formulae for the action of a simple reflection on a monomial $x^\la$ in each case, and upon comparison, see that the actions are the same. The case of general $f$ then follows by linearity.
\end{proof}

Chinta and Gunnells use their action to construct the $p$-part of a Weyl group multiple Dirichlet series. This requires the construction of an auxiliary polynomial
$$
N(\chi,\la)=\prod_{\a > 0} \frac{1-q^{-1}x_\a^{n_\a}}{1-x_\a^{n_\a}} \sum_{w\in W} \sgn(w) \left(\prod_{\substack \a\in\Phi(w)}x_\a^{n_\a}\right) (1|_\la w)(\chi).
$$

Now the culmination of the above leads to our final result. Informally, this states that the value of the metaplectic Whittaker function on a torus element is equal to the $p$-part of a Weyl group multiple Dirichlet series.

\begin{theorem}
Let $\la$ be dominant. The following identity holds: $$(\delta^{-1/2}\W)(\p^{\la})=\chi(\p^\la) N(\chi,\la).$$ 
\end{theorem}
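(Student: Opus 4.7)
My plan is to combine three results already established in this paper: Theorem \ref{formal} (the formal Casselman-Shalika formula), the comparison proposition just established (identifying the action $w\circ-$ with the twisted $||\,$action), and Theorem \ref{gkunramified} (the Gindikin-Karpelevic formula). First, I would specialize Theorem \ref{formal} to $t = \p^\la$ and apply the comparison proposition to each summand, converting the factor $c_{w_0}(w^{-1}\chi)(w\circ m_{\p^\la})(\chi)$ into $c_{w_0}(\chi)\sgn(w)\prod_{\a\in\Phi(w)}x_\a^{n_\a}(m_{\p^\la}||w)(\chi)$. Summing over $w$ yields
\[
\W(\p^\la) = \delta^{1/2}(\p^\la)\,c_{w_0}(\chi)\sum_{w\in W}\sgn(w)\prod_{\a\in\Phi(w)}x_\a^{n_\a}(m_{\p^\la}||w)(\chi).
\]

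Next, I would identify the prefactor $c_{w_0}(\chi)$. Since $G$ is split, only the $SL_2$ branch of Theorem \ref{gkunramified} is needed, giving $c_{s_\a}(\chi) = (1-q^{-1}x_\a^{n_\a})/(1-x_\a^{n_\a})$ for each simple reflection $s_\a$. Applying the cocycle relation of Proposition \ref{fubini} to a reduced expression for $w_0$ then gives
\[
c_{w_0}(\chi) = \prod_{\a>0}\frac{1-q^{-1}x_\a^{n_\a}}{1-x_\a^{n_\a}},
\]
which is exactly the rational prefactor appearing in the definition of $N(\chi,\la)$. Combined with the previous step, this reduces the theorem to the identity of $W$-sums
\[
\sum_{w\in W}\sgn(w)\prod_{\a\in\Phi(w)}x_\a^{n_\a}(m_{\p^\la}||w)(\chi) = \chi(\p^\la)\sum_{w\in W}\sgn(w)\prod_{\a\in\Phi(w)}x_\a^{n_\a}(1|_\la w)(\chi).
\]

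Applying the defining relation $f||w = x^\la(x^{-\la}f|_\la w)$ with $f = x^\la$ gives $(x^\la||w)(\chi) = \chi(\p^\la)(1|_\la w)(\chi)$, so the right-hand side becomes $\sum_w \sgn(w)\prod_{\a\in\Phi(w)}x_\a^{n_\a}(x^\la||w)(\chi)$. The final step is therefore to show $L(m_{\p^\la}) = L(x^\la)$, where $L(f) := \sum_w \sgn(w)\prod_{\a\in\Phi(w)}x_\a^{n_\a}(f||w)$. Since $m_{\p^\la}(\chi) = \chi(\p^{w_0\la})$ differs from $\chi(\p^\la)$ precisely by the Weyl action of $w_0$, I expect this identity to follow from a change of summation index $w\mapsto ww_0$, combined with the right $W$-action property $(f||ww_0) = (f||w)||w_0$, the length identity $\ell(ww_0) = \ell(w_0)-\ell(w)$ (so that $\Phi(ww_0)$ and $\Phi(w)$ are complementary in $\Phi^+$ up to a $w_0$-twist), and an explicit evaluation of how $||w_0$ transforms the monomial $x^\la$ into a multiple of $x^{w_0\la}$. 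This reindexing and its bookkeeping is the main technical obstacle; everything preceding it is a direct assembly of the three earlier results.
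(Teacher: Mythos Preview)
Your overall strategy---combine Theorem~\ref{formal}, the comparison proposition, and the Gindikin--Karpelevic product for $c_{w_0}(\chi)$---is exactly what the paper intends; indeed the paper states the theorem as ``the culmination of the above'' and offers no further argument. Steps (1)--(4) of your outline are correct and cleanly reduce the theorem to the identity
\[
L(m_{\p^\la})=L(x^\la),\qquad L(f):=\sum_{w\in W}\sgn(w)\prod_{\a\in\Phi(w)}x_\a^{n_\a}\,(f\,||\,w).
\]

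The gap is in your proposed resolution of this last identity. You suggest reindexing $w\mapsto ww_0$ together with the claim that $||\,w_0$ sends $x^\la$ to a scalar multiple of $x^{w_0\la}$. That claim is false: the Chinta--Gunnells action does \emph{not} carry monomials to monomials. Already for a simple reflection $s_\a$, the explicit formula displayed just before the comparison proposition shows that $x^\la\,||\,s_\a$ is a genuine two--term expression in $x^{s_\a\la}$ and $x^{s_\a\la-\a}$ (the $\tau^1$ and $\tau^2$ pieces), so $x^\la\,||\,w_0$ is a sum over many monomials, not a single one. Consequently the reindexing argument as you sketch it cannot close the gap.

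What does work is to use that, by the comparison proposition, $w\mapsto\sgn(w)\prod_{\a\in\Phi(w)}x_\a^{n_\a}\,(-\,||\,w)$ is a genuine $W$--action (call it $\star$); hence $L$ is the full symmetriser for $\star$, and $L(v\star f)=L(f)$ for every $v\in W$. The point you are missing is therefore not a monomial--to--monomial statement but rather that $m_{\p^\la}$ and $x^\la$ lie in the same $\star$--orbit (equivalently, that one may pass between the two via the cocycle relation for $c_{w_0}$ together with the identity $c_{w_0}(\chi)=c_{w_0^{-1}}(w_0\chi)$ coming from Proposition~\ref{whomomorphism} applied to $w_0\cdot w_0=1$). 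This is the bookkeeping that replaces your incorrect ``$||\,w_0$ sends $x^\la$ to a multiple of $x^{w_0\la}$'' step; once it is in place, the rest of your argument goes through unchanged.
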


The results of this section suggest how to extend the results in \cite{cg} beyond the split case. We note that some of the content of this paper would accomplish some of the necessary work to achieve this aim. For example, we have obtained an independent proof of \cite[Theorem 3.2]{cg}. Although this proof is more indirect than that in \cite{cg}, it has the advantage of being more conceptual.

%%%%%%%%%%%%%%%%%%%%%%%%%%%%%%%%%%%%%%%%%%%%%%%
%\section{the spherical function}
%%%%%%%%%%%%%%%%%%%%%%%%%%%%%%%%%%%%%%%%%%%%%

%\bibliographystyle{alpha}
%\bibliography{bibfile}

\def\cprime{$'$}

\end{document}